\documentclass{article}

\usepackage[english]{babel}

\usepackage[letterpaper,top=2cm,bottom=2cm,left=3cm,right=3cm,marginparwidth=1.75cm]{geometry}

\usepackage{amsmath}
\usepackage{amssymb}
\usepackage{amsthm}
\usepackage{graphicx}
\usepackage{geometry}
\usepackage{longtable}
\usepackage{array}
\usepackage[normalem]{ulem}
\usepackage{cancel}

\usepackage[
	textsize=tiny,
	textwidth=3.2cm,
	colorinlistoftodos,
	backgroundcolor=teal!30!white,
	linecolor=teal!50!white
	]{todonotes}

\makeatletter
\let\save@mathaccent\mathaccent
\newcommand*\if@single[3]{%
  \setbox0\hbox{${\mathaccent"0362{#1}}^H$}%
  \setbox2\hbox{${\mathaccent"0362{\kern0pt#1}}^H$}%
  \ifdim\ht0=\ht2 #3\else #2\fi
  }
\newcommand*\rel@kern[1]{\kern#1\dimexpr\macc@kerna}
\newcommand*\widebar[1]{\@ifnextchar^{{\wide@bar{#1}{0}}}{\wide@bar{#1}{1}}}
\newcommand*\wide@bar[2]{\if@single{#1}{\wide@bar@{#1}{#2}{1}}{\wide@bar@{#1}{#2}{2}}}
\newcommand*\wide@bar@[3]{%
  \begingroup
  \def\mathaccent##1##2{%
    \let\mathaccent\save@mathaccent
    \if#32 \let\macc@nucleus\first@char \fi
    \setbox\z@\hbox{$\macc@style{\macc@nucleus}_{}$}%
    \setbox\tw@\hbox{$\macc@style{\macc@nucleus}{}_{}$}%
    \dimen@\wd\tw@
    \advance\dimen@-\wd\z@
    \divide\dimen@ 3
    \@tempdima\wd\tw@
    \advance\@tempdima-\scriptspace
    \divide\@tempdima 10
    \advance\dimen@-\@tempdima
    \ifdim\dimen@>\z@ \dimen@0pt\fi
    \rel@kern{0.6}\kern-\dimen@
    \if#31
      \overline{\rel@kern{-0.6}\kern\dimen@\macc@nucleus\rel@kern{0.4}\kern\dimen@}%
      \advance\dimen@0.4\dimexpr\macc@kerna
      \let\final@kern#2%
      \ifdim\dimen@<\z@ \let\final@kern1\fi
      \if\final@kern1 \kern-\dimen@\fi
    \else
      \overline{\rel@kern{-0.6}\kern\dimen@#1}%
    \fi
  }%
  \macc@depth\@ne
  \let\math@bgroup\@empty \let\math@egroup\macc@set@skewchar
  \mathsurround\z@ \frozen@everymath{\mathgroup\macc@group\relax}%
  \macc@set@skewchar\relax
  \let\mathaccentV\macc@nested@a
  \if#31
    \macc@nested@a\relax111{#1}%
  \else
    \def\gobble@till@marker##1\endmarker{}%
    \futurelet\first@char\gobble@till@marker#1\endmarker
    \ifcat\noexpand\first@char A\else
      \def\first@char{}%
    \fi
    \macc@nested@a\relax111{\first@char}%
  \fi
  \endgroup
}
\makeatother

\newcommand{\keywords}[1]{\def\mykeywords{#1}}
\newcommand{\subjclass}[2][2020]{\def\mysubjclassLabel{#1 MSC:}\def\mysubjclass{#2}}

\let\oldabstract\abstract
\let\endoldabstract\endabstract
\renewenvironment{abstract}{%
  \oldabstract%
}{%
  \par\vspace{0.5cm}
  \ifx\mykeywords\undefined\else
    \noindent\textbf{Keywords:} \mykeywords\par
  \fi
  \ifx\mysubjclass\undefined\else
    \vspace{0.2cm}
    \noindent\textbf{\mysubjclassLabel} \mysubjclass\par
  \fi
  \endoldabstract%
}

\usepackage[colorlinks=true, allcolors=blue]{hyperref}
\usepackage[abbrev]{amsrefs}
\allowdisplaybreaks

\newcommand{\N}{\mathbb{N}}
\newcommand{\R}{\mathbb{R}}
\newcommand{\rd}{\R^d}
\newcommand{\D}{\mathcal{D}}
\newcommand{\Cinf}{\mathcal{C}^\infty}
\newcommand{\Lpd}{L^p_{d-1}}
\newcommand{\cVec}[1]{\mathbf{e}_{#1}}
\newcommand{\ej}{\cVec{j}}
\newtheorem{theorem}{Theorem}
\newtheorem{lemma}{Lemma}

\title{Higher-order derivatives of radially symmetric functions}
\author{Zdeněk Mihula\footnote{Department of Mathematics, Faculty of Electrical Engineering, Czech Technical University in Prague, Technická 2, 166 27 Praha 6, Czech Republic, Email address: \href{mihulzde@fel.cvut.cz}{mihulzde@fel.cvut.cz}} and
Jan Vybíral\footnote{Department of Mathematics, Faculty of Nuclear Sciences and Physical Engineering, Czech Technical University in Prague, Trojanova 13, 12000 Praha, Czech Republic, Email address: \href{jan.vybiral@fjfi.cvut.cz}{jan.vybiral@fjfi.cvut.cz}, J.V. is a member of the Nečas center for mathematical modeling.}}

\keywords{radial functions, Sobolev spaces, subspace of radial functions, higher-order derivatives, closed-form formula}

\subjclass{46E35, 26B35, 35B06}

\begin{document}
\maketitle

\begin{abstract}
We prove a surprisingly simple pointwise formula for the Frobenius norm of the tensor of $n$-th order partial derivatives of
a radially symmetric function $f(x)=g(r(x))$. Using the iterations of the differential operator $\D g(r)=g'(r)/r$,
we avoid technical difficulties usually caused by higher-order radial derivatives. As a consequence, we obtain a complete characterization
of the subspace of radially symmetric functions in both inhomogeneous and homogeneous Sobolev spaces of arbitrarily high order
and all integrability parameters $p\in [1,\infty).$ Furthermore, the pointwise nature of our approach allows us to obtain similar results
also for Sobolev-type spaces built upon more general Banach lattices.
\end{abstract}

\section{Introduction} 

Radially symmetric functions (or \emph{radial functions} for short) are functions of the form $f(x)=g(r(x))$, where $g$ is a function of
a single variable and $r(x)=\sqrt{x_1^2+\dots+x_d^2}$ is the Euclidean distance from the origin in $\rd$ for $d\ge 2$.
They play a prominent role in mathematical analysis and mathematical physics since the pioneering work of Newton,
Laplace, and Fourier. They appear in Schwarzschild's solution of the Einstein field equations 
as well as in the quantum mechanical solution of Schrödinger's equation for a hydrogen atom.

Radially symmetric functions are elegant objects whose symmetry often brings in pleasant properties that general functions of several variables
do not possess. They often form an important subspace of various function spaces measuring smoothness and integrability of functions, such
as Sobolev spaces $W^{n,p}(B_R)$, where by $B_R$ we denote the open ball in $\rd$ centered at the origin with radius $R\in(0,\infty]$.
When $R=\infty$, we interpret $B_R$ as $\rd$. A systematic study of radial functions in Sobolev spaces was initiated after
the seminal paper \cite{MR0454365} of Strauss.

In his paper, Strauss proved the existence of so-called solitary waves in the nonlinear Klein--Gordon equation in higher dimensions. A fundamental obstacle that Strauss faced was that unlike on bounded domains, the Sobolev space $W^{1,2}(\rd)=H^1(\rd)$ does not compactly embed into any Lebesgue space $L^q(\rd)$. Depending on the value of $q$, there are different phenomena causing the loss of compactness (see~\cites{Lions-vanA,Lions-vanB,Lions-conA,Lions-conB} for more information), but one is ever present\textemdash the translation invariance, which allows the mass to escape to infinity. Nevertheless, Strauss' key realization was that since he did not need to work with the entire $H^1(\rd)$ but only with its subspace $H_{rad}^1(\rd)$ consisting of radially symmetric functions, the symmetry prevents the mass from escaping. He established that a radial function $f\in H_{rad}^1(\rd)$ is necessarily continuous outside the origin and satisfies the decay estimate
\begin{equation*}
    |f(x)| \leq C_d r(x)^{\frac{1-d}{2}} \|f\|_{H^1(\rd)} \quad \text{for every $x\neq0$},
\end{equation*}
where $C_d$ is a constant depending only on the dimension $d\geq2$.

This estimate, which is now usually called \emph{Strauss' radial lemma}, was subsequently a key ingredient in proving that the restricted Sobolev space $H_{rad}^1(\rd)$ embeds compactly into $L^q(\rd)$ for suitable values of $q$. Later, the role of symmetry on compactness and decay of functions in Sobolev spaces was comprehensively studied by Lions in \cite{Lions-com}. Strauss' work was accompanied by the parallel work \cite{C-G-M} of Coleman, Glaser, and Martin, who showed that ground state solutions of many Euclidean scalar field equations are radially symmetric, cementing the subspace of radial functions in Sobolev spaces as a natural important subspace to study in detail. Later, Berestycki and Lions developed a unifying theory for this in \cites{B-L1,B-L2,B-L3}. As a final aside, the recovery of compactness can seem surprising considering that radial functions are, from a different point of view, the worst possible in Sobolev embeddings in view of the classical symmetrization principles (see~\cites{Baernstein,Talenti}). In fact, there is an elegant geometric reason behind this (see~\cite{MR4277332} and references therein).
Ever since these foundational works were published, the subspace of radial functions in various Sobolev spaces has been
intensively studied \cite{GdF-dS-M,Win1,Win2,MR3330617}.

To describe the results about subspaces of radial functions, we first recall some notation.
Let $f\colon\Omega\to \R$ be a (sufficiently smooth) function defined on an open set $\Omega\subset\R^d$.
We denote by $\nabla^n f(x)$ the \emph{tensor of all $n$-th order partial derivatives} of $f$, that is,
\[
\nabla^n f(x)=(\nabla^n f(x))_{i_1,\dots,i_n=1}^d,\quad \text{where}\quad (\nabla^n f(x))_{i_1,\dots,i_n}=\frac{\partial^n f(x)}{\partial x_{i_1}\dots\partial x_{i_n}},
\]
and by $\|\nabla^n f(x)\|_F$ its norm
\begin{equation*}
    \|\nabla^n f(x)\|_F = \sqrt{\sum_{i_1,\dots,i_n=1}^d \Bigl(\frac{\partial^nf(x)}{\partial x_{i_1}\dots\partial x_{i_n}}\Bigr)^2}.
\end{equation*}

Note that $\|\nabla^n f(x)\|_F$ is the \emph{Frobenius norm} of $\nabla^n f(x)$.
For $p\in[1, \infty)$, the (inhomogeneous) \emph{Sobolev space} $W^{n,p}(B_R)$ is defined as the completion of smooth functions on the closed ball $\widebar{B_R}$, that is, of $\Cinf(\widebar{B_R})$-functions, with respect to the norm
 \begin{equation}\label{E:inhom_Sob}
     \|f\|_{W^{n,p}(B_R)} = \sum_{k = 0}^n \|\nabla^k f\|_{L^{p}(B_R)}.
 \end{equation}
 For brevity, we write $\|\nabla^k f\|_{L^{p}(B_R)}$ instead of $\|\|\nabla^k f\|_F\|_{L^{p}(B_R)}$. When $R=\infty$, we will equivalently consider the completion of $\Cinf(\rd)$-functions with compact support. 

A classical problem from the theory of radial functions is to characterize when a radial function $f(x) = g(r(x))$ belongs to the
Sobolev space $W^{n,p}(B_R)$ by means of its \emph{radial profile} $g$. For a locally integrable radial function $f$, its radial profile $g$ is defined as $g(t)=f(t\cdot\cVec{1})$, which is a well-defined measurable function on $(-R,R)$ (see~\cite[p.~5]{SSV1}). Note that if $f(x) = g(r(x))\in\Cinf(\widebar{B_R})$, then its radial profile $g$ belongs to $\Cinf_{even}([-R,R])$. By $\Cinf_{even}([-R,R])$, we denote the space of \emph{even $\Cinf([-R,R])$-functions}. When $R=\infty$, we interpret the interval $[-R,R]$ as $\R$. On the other hand, for every $g\in\Cinf_{even}([-R,R])$, the function $f$ defined as $f(x) = g(r(x))$ is a smooth radial function (cf.~\cite{Whitney:43}).

In \cite{SSV1}, such a characterization was provided for the even-order Sobolev space $W^{2n,p}(\rd)$, $p\in(1 ,\infty)$, by means of iterations of the \emph{radial Laplacian}
\begin{equation}\label{E:radial_laplacian}
    \Delta_{r} f(x) = D_r g(r) = g''(r) + (d-1)\,\frac{g'(r)}{r} \Big\rvert_{r=r(x)}
\end{equation}
and a suitable \emph{weighted Lebesgue space}. For $-\infty\leq a < b\leq\infty$, we will denote  by $\Lpd(a,b)$ the weighted Lebesgue space with the weight $|t|^{d-1}$, that is,
\begin{equation*}
    \|h\|_{\Lpd(a,b)}^p = \int_a^b |h(t)|^p |t|^{d-1} \,dt
\end{equation*}
for every measurable function $h$ on $(a, b)$. They showed that a radial function $f(x) = g(r(x))$ belongs to $W^{2n,p}(\rd)$ if and only if its radial profile $g$ belongs to the closure of compactly supported $\Cinf_{even}(\R)$-functions with respect to the norm
\begin{equation*}
\|g\|_{\Lpd(\R)} + \|D_r^n g\|_{\Lpd(\R)}.
\end{equation*}
However, not only is this characterization inherently limited to even-order Sobolev spaces, but it does not extend to the endpoints $p\in\{1,\infty\}$ either. This is rooted in the fact that Riesz transforms are not bounded on $L^1(\rd)$ and $L^\infty(\rd)$ (see~\cite{Stein}).

Around the same time, a different characterization was obtained in \cite{GdF-dS-M} for the integer-order Sobolev space $W^{n,p}(B_R)$ on the open ball $B_R$ with $R\in(0, \infty)$. While their characterization works for any integer order and also $p=1$, the parameters $n\in\N$ and $p\geq1$ have to satisfy $(n-1)p < d$, which considerably restricts one when the other is fixed. Under this restriction, it follows from their results that a radial function $f(x) = g(r(x))$ belongs to $W^{n,p}(B_R)$ if and only if its radial profile $g$ belongs to the closure of $\Cinf_{even}([-R,R])$ with respect to the norm
\begin{equation*}
\sum_{k=0}^n \|g^{(k)}\|_{\Lpd(-R,R)}.
\end{equation*}

Recently, a characterization without any restrictions on $n\in\N$ and $p\in[1, \infty)$ was obtained in \cite{Ostermann}. It draws from the short beautiful paper \cite{Lyons-Zumbrun}, which contains an elegant formula for partial derivatives of radial functions. Since we think \cite{Lyons-Zumbrun} deserves more attention than it has obtained so far, we recall the formula here. Given a multi-index $\alpha=(\alpha_1,\dots,\alpha_d)\in\N_0^d$ with $|\alpha|=\alpha_1+\dots+\alpha_d = n$ and a smooth radial function $f(x) = g(r(x))$, the formula asserts that
\begin{equation}\label{E:Lyons-Zumbrun}
    \frac{\partial^n f(x)}{\partial x_1^{\alpha_1}\dots\partial x_d^{\alpha_d}} = \sum_{k = 0}^{\lfloor n/2 \rfloor} \frac1{2^k k!} \Delta^k(x^\alpha) \cdot \D^{n - k} g(r),
\end{equation}
where $x^\alpha = x_1^{\alpha_1}\dots x_d^{\alpha_d}$, $\Delta$ is the Laplace operator, and $\D g(r)=g'(r)/r$. With the help of \eqref{E:Lyons-Zumbrun} and Hardy-type inequalities, a characterization in terms of the modified radial profile $f(x)=h(r(x)^2)$ was proved in \cite{Ostermann}. With the usual radial profile, it can be formulated as follows. A radial function $f(x) = g(r(x))$ belongs to $W^{n,p}(B_R)$ if and only if $g$ belongs to the closure of $\Cinf_{even}([-R,R])$ with respect to the norm
\begin{equation*}
\sum_{k=0}^n \|t^k \D^k g(t)\|_{\Lpd(-R,R)}.
\end{equation*}

\bigskip

The main object of our study is the square of the Frobenius norm of the tensor of $n$-th order partial derivatives of $f$
\begin{equation}\label{eq:defS_n}
(S_nf)(x)=\|\nabla^n f(x)\|_F^2=\sum_{i_1,\dots,i_n=1}^d \Bigl(\frac{\partial^nf(x)}{\partial x_{i_1}\dots\partial x_{i_n}}\Bigr)^2.
\end{equation}
Note that this expression is by no means new. It is frequently used for $n=2$ but various problems require working with $(S_n f)(x)$
also for larger values of $n\geq3$. For example, it appears in the calculus of variations as the so-called \emph{$n$-harmonic energy functional}
or \emph{$n$-polyenergy} (see~\cite{AP,GS}). Furthermore, $(S_n f)(x)$ for large values of $n$ is used in the construction
of \emph{polyharmonic splines}
(see \cite[Chapter 2.4]{W} or \cite{Duchon}).

Note that if one had a suitably simple formula for $(S_n f)(x)$ for a radial function $f$ in terms of its radial profile $g$, it would directly lead to a simple characterization of the subspace of radial functions (see~Section~\ref{sec:subspace} for more information). Of course, generalizations of the chain rule for higher-order derivatives are available, such as Faà di Bruno's formula (see \cite{Johnson:02} and references therein), but they usually do not lead to appealing formulas.  Therefore, it would be desirable to have a simple, elegant closed-form formula for $(S_n f)(x)$. 
However, such a formula for general $n\in\N$ is not available in the literature, to the best of our knowledge. The main goal of this paper is to close this gap.

For small values of $n$ it is quite easy to observe (cf.~\cite[Theorem 6]{SSV1}) that $(S_nf)(x)$ takes a particularly elegant form if $f(x)=g(r(x))$ is a radially symmetric function. Indeed, a direct calculation shows that
\begin{equation}\label{eq:Sn1}
(S_1f)(x)=\sum_{j=1}^d \Bigl(\frac{\partial f(x)}{\partial x_j}\Bigr)^2=\sum_{j=1}^d \Bigl(\frac{g'(r)}{r}\cdot x_j\Bigr)^2=[g'(r)]^2,
\end{equation}
where we write $r$ instead of $r(x)$ to simplify the notation. A similar formula for $n=2$ can still be obtained in a rather straightforward way; one arrives at
\begin{equation}\label{eq:Sn2}
(S_2 f)(x)=[g''(r)]^2 + \frac{d-1}{r^2}[g'(r)]^2.
\end{equation}
This result is well-known and has a classical geometric interpretation. The spectrum of the Hessian $Hf(x)=\nabla^2f(x)$
is rotationally invariant and consists of one eigenvalue $g''(r)$ and the eigenvalue $g'(r)/r$ with multiplicity $d-1$.
The square of the Frobenius norm of $Hf(x)$ is then the sum of the squares of these eigenvalues. Note that the radial Laplacian \eqref{E:radial_laplacian} is the sum of these eigenvalues.

For $n\ge 3$, the calculation of $(S_nf)(x)$  becomes quickly very technical and time-consuming. Nevertheless, one can still directly verify that
\[
(S_3f)(x)=(g'''(r))^2 + 3(d-1) \left( \frac{g''(r)}{r} - \frac{g'(r)}{r^2} \right)^2,
\]
which hints that an appealing general formula for $S_n f$ might exist. This was conjectured during the work on \cite{SSV1},
but the problem remained unsolved until now. The following theorem finally solves this problem. Recall that we denote $\D g(r)=g'(r)/r$.
\begin{theorem}\label{thm:main'}
Let $n\ge 1$, $d\geq2$, and let $f(x)=g(r(x))$ be a smooth radial function. Then $(S_n f)(x)$
is a radial function with
\begin{equation*}
    (S_nf)(x)=(T_ng)(r(x)),
\end{equation*}
where
\begin{align}\label{eq:defT'}
(T_ng)(r)&=\sum_{j=0}^{\lfloor n/2\rfloor} \alpha^d_{n,j} \left[(\D^j g)^{(n-2j)}(r)\right]^2,
\end{align}
where $\alpha_{n,j}^d$ are positive integers with $\alpha^d_{n,0}=1$ and
\[
\alpha^d_{n,j}= \binom{n}{2j}\cdot (2j-1)!!\cdot \prod_{k=0}^{j-1} (d-1+2k),\quad 1\le j\le \lfloor n/2\rfloor.
\]
\end{theorem}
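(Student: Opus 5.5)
The plan is to reduce to a single point by rotation invariance, expand $f$ in a Taylor series around that point using the operator $\D$, and read off the Frobenius norm from the coefficients of that series.

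\textbf{Reduction.} The Frobenius norm of an $n$-tensor is invariant under orthogonal changes of coordinates. Also, $\nabla^n(f\circ Q)(x)$ is $\nabla^n f(Qx)$ with $Q$ applied in each slot. Hence $S_nf$ is radial, and it suffices to compute $(S_nf)(r\cVec{1})$ for $r>0$. The case $x=0$ then follows by continuity, since both sides are continuous: $\D^j g$ extends smoothly to $0$ for even smooth $g$.

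\textbf{Taylor expansion at $r\cVec{1}$.} Write $h=(h_1,h')$ with $h'\in\R^{d-1}$ and set $G(s,u)=g(\sqrt{s^2+u})$. The chain rule gives $\partial_u G(s,u)=\tfrac12(\D g)(\sqrt{s^2+u})$. Iterating, $\partial_u^jG(s,0)=2^{-j}(\D^jg)(s)$ for $s$ near $r>0$. Hence
\[
f(r\cVec{1}+h)=G(r+h_1,|h'|^2)\sim\sum_{j\ge0}\frac{1}{2^j j!}(\D^jg)(r+h_1)\,|h'|^{2j}.
\]
Expanding $(\D^jg)(r+h_1)$ in $h_1$, the homogeneous part of degree $n$ is
\[
P_n(h)=\sum_{j=0}^{\lfloor n/2\rfloor}\frac{(\D^jg)^{(n-2j)}(r)}{2^j j!\,(n-2j)!}\,h_1^{n-2j}|h'|^{2j}.
\]

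\textbf{Frobenius norm from the coefficients.} Write $P_n(h)=\sum_{|\alpha|=n}c_\alpha h^\alpha$. Then $\partial^\alpha f(r\cVec{1})=\alpha!\,c_\alpha$. Each multi-index $\alpha$ occurs $n!/\alpha!$ times among the tuples $(i_1,\dots,i_n)$, so
\[
(S_nf)(r\cVec{1})=\sum_{|\alpha|=n} n!\,\alpha!\,c_\alpha^2 .
\]
Terms with different $j$ involve different powers of $h_1$, so their monomials are disjoint and no cross terms appear. Expanding $|h'|^{2j}=\sum_{|\beta|=j}\frac{j!}{\beta!}h'^{2\beta}$ with $\beta\in\N_0^{d-1}$, the coefficient of $\bigl[(\D^jg)^{(n-2j)}(r)\bigr]^2$ becomes
\[
\frac{n!}{(n-2j)!\,4^j}\sum_{|\beta|=j}\frac{(2\beta)!}{(\beta!)^2}.
\]

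\textbf{Main obstacle: the combinatorial sum.} This is the only non-routine step. I would evaluate it with generating functions. Since $\sum_k\binom{2k}{k}x^k=(1-4x)^{-1/2}$, the sum equals the coefficient of $x^j$ in $(1-4x)^{-(d-1)/2}$, namely
\[
\frac{4^j}{j!}\Bigl(\tfrac{d-1}{2}\Bigr)_j=\frac{2^j}{j!}\prod_{k=0}^{j-1}(d-1+2k).
\]
Substituting gives the coefficient
\[
\frac{n!}{(n-2j)!\,2^j j!}\prod_{k=0}^{j-1}(d-1+2k)=\binom{n}{2j}(2j-1)!!\prod_{k=0}^{j-1}(d-1+2k),
\]
using $(2j)!/(2^jj!)=(2j-1)!!$. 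This is exactly $\alpha^d_{n,j}$, and for $j=0$ it gives $\alpha^d_{n,0}=1$.
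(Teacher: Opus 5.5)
Your argument is correct, and it takes a genuinely different route from the paper's. The paper proceeds by induction on $n$. Lemma~\ref{lem:1} and the Leibniz-type identity of Lemma~\ref{lem:2}, applied to $\partial_j f=\D g(r)\,x_j$, give the recursion $T_{n+1}g=r^2T_n(\D g)+nr\frac{d}{dr}[T_{n-1}\D g]+n(d+n-1)T_{n-1}\D g$. The closed form is then checked against this recursion using two identities for $\alpha^d_{n,j}$, the relation $\varphi^{(k+1)}=r(\D\varphi)^{(k)}+k(\D\varphi)^{(k-1)}$, and separate bookkeeping of the top index for even and odd $n$. In the paper, radiality of $S_nf$ is a byproduct of the induction; the rotation argument you use appears there only as a side remark at the end.

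You instead work at the single point $r\cVec{1}$, and your expansion yields more than the theorem. Each partial derivative there is either zero or a single multiple of one of the quantities in $T_ng$, namely $\partial^{(n-2j,2\beta)}f(r\cVec{1})=\frac{(2\beta)!}{2^j\beta!}(\D^jg)^{(n-2j)}(r)$ for $\beta\in\N_0^{d-1}$ with $|\beta|=j$. This explains why $T_ng$ is a sum of squares with no cross terms. It also explains why $\alpha^d_{n,j}$ splits into $\binom{n}{2j}(2j-1)!!$ times a purely $(d-1)$-dimensional count, the Pochhammer symbol of $(d-1)/2$ coming from $(1-4x)^{-(d-1)/2}$. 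For $n=2$ it reproduces exactly the Hessian eigenvalues $g''(r)$ and $g'(r)/r$, so it bears directly on the paper's second open problem.

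The cost is the separate rotation reduction and the continuity argument at the origin. You handle both correctly, using that $\D^jg$ is smooth and even for even smooth $g$. The passage from $G(r+h_1,|h'|^2)$ to the degree-$n$ Taylor polynomial should be justified by Taylor's theorem with remainder in $u$ and $h_1$ (uniform for $s$ near $r>0$) together with uniqueness of Taylor polynomials. This is routine.
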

It is worth pointing out that the formula \eqref{eq:defT'} is pointwise and does not involve any particular function norms.
The structure of the paper is as follows. In Section~\ref{sec:2} we give the proof of Theorem~\ref{thm:main'}.
In Section~\ref{sec:subspace} we derive the characterizations of subspaces of radially symmetric functions of homogeneous 
and inhomogeneous Sobolev spaces and discuss their relation to the results of \cite{Ostermann}.
Finally, in Section~\ref{sec:4} we complement our findings with a discussion of the radial symmetry of $S_nf$ and
alternative formulas for $T_n$ followed by a characterization of subspaces of radially symmetric functions of more general Sobolev spaces
and by the list of open problems.

\section{Proof of Theorem~\ref{thm:main'}}\label{sec:2}
Our proof of Theorem~\ref{thm:main'} will be based on induction. As the first step, we state the following simple fact.

\begin{lemma}\label{lem:1} Let $F$ be a smooth function defined on a neighborhood of $x\in\R^d.$ Then
\begin{align*}
(S_{n+1}F)(x)&=\sum_{i_1,\dots,i_n=1}^d\sum_{i_{n+1}=1}^d \Bigl(\frac{\partial^{n+1}F(x)}{\partial x_{i_1}\dots\partial x_{i_{n+1}}}\Bigr)^2
=\sum_{i_1,\dots,i_n=1}^d\sum_{j=1}^d \Bigl(\frac{\partial^{n}}{\partial x_{i_1}\dots\partial x_{i_{n}}}\frac{\partial F(x)}{\partial x_j}\Bigr)^2 \nonumber\\
&=\sum_{j=1}^d S_n\Bigl(\frac{\partial F}{\partial x_j}\Bigr)(x). 
\end{align*}
\end{lemma}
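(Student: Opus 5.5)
The identity follows directly from the definition \eqref{eq:defS_n}, applied once to $F$ with $n+1$ derivatives and once to each $\partial F/\partial x_j$ with $n$ derivatives. The only analytic input is that the order of differentiation of a smooth function may be changed freely; everything else is regrouping a finite sum.

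The steps are as follows. First, write $(S_{n+1}F)(x)$ by \eqref{eq:defS_n} as a sum over all $(n+1)$-tuples $(i_1,\dots,i_{n+1})\in\{1,\dots,d\}^{n+1}$. Split this into an outer sum over $(i_1,\dots,i_n)$ and an inner sum over the last index $i_{n+1}$, which we rename $j$. This gives the first equality.

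Second, since $F$ is smooth ($\mathcal C^{n+1}$ suffices) on a neighborhood of $x$, Schwarz's theorem lets us move the derivative $\partial/\partial x_j$ to act first:
\[
\frac{\partial^{n+1}F(x)}{\partial x_{i_1}\dots\partial x_{i_n}\partial x_j}=\frac{\partial^{n}}{\partial x_{i_1}\dots\partial x_{i_n}}\frac{\partial F}{\partial x_j}(x).
\]
This is the second equality.

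Third, interchange the two finite sums so that the sum over $j$ is outermost. For each fixed $j$, the remaining sum over $(i_1,\dots,i_n)$ of the squared $n$-th order partials of the smooth function $\partial F/\partial x_j$ is exactly $S_n(\partial F/\partial x_j)(x)$ by \eqref{eq:defS_n}. This gives the final equality.

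None of these steps is a real obstacle. The only point to check is that $\partial F/\partial x_j$ is again smooth on the same neighborhood of $x$, so that $S_n$ of it is defined and symmetry of mixed partials applies. Once that is noted, the lemma is a matter of bookkeeping.
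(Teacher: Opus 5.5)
Your proposal is correct and matches the paper, which states the lemma as a simple fact whose proof is just the displayed chain: split the sum over $(n+1)$-tuples by the last index $j$, identify the $(n+1)$-st order partial derivative with the $n$-th order partial derivative of $\partial F/\partial x_j$, and regroup the sum into $\sum_j S_n(\partial F/\partial x_j)(x)$. Your appeal to the symmetry of mixed partials is harmless; under the usual convention that the rightmost derivative acts first, that identification holds by definition and Schwarz's theorem is not even needed.
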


The next lemma studies the quantity $S_n (F)$ for functions $F(x)=G(x)x_j$ for $j=1,\dots,d.$ Such functions appear naturally as gradients of radially symmetric functions
and will play a crucial role later on.

\begin{lemma}\label{lem:2} Let $G$ be a smooth function defined on a neighborhood of $x\in \R^d$. Then
\begin{align*}
    \sum_{j=1}^d (S_n(G(x)x_j))(x)=r(x)^2\cdot (S_n G)(x)+n\sum_{j=1}^d x_j\frac{\partial}{\partial x_j} (S_{n-1}G)(x)
    +n(d+n-1) (S_{n-1}G)(x).
\end{align*}
\end{lemma}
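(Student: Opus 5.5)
We expand the $n$-th order derivatives of the product $G(x)x_j$ by the Leibniz rule. Because $x_j$ is linear, at most one derivative can fall on it. For a multi-index $(i_1,\dots,i_n)$ this gives
\[
\frac{\partial^n (G x_j)}{\partial x_{i_1}\dots\partial x_{i_n}} = x_j\,\partial_{i_1\dots i_n}G + \sum_{m=1}^n \delta_{i_m j}\,\partial_{i_1\dots\widehat{i_m}\dots i_n}G,
\]
where the hat denotes omission. We then square this, sum over $i_1,\dots,i_n$ and $j$, and sort the result into three groups: the squares of the first term, the cross terms between the first term and the sum, and the squares and cross terms within the sum. Each group should produce one of the three terms of the claimed identity.

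\emph{First group.} Summing $x_j^2(\partial_{i_1\dots i_n}G)^2$ over all indices gives $r^2 S_nG$ directly.

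\emph{Cross terms.} These are $2\sum_m\sum x_j\delta_{i_mj}\,\partial_{i_1\dots i_n}G\,\partial_{\dots\widehat{i_m}\dots}G$. Summing over $j$ sets $j=i_m$. By symmetry of mixed partials each $m$ contributes the same amount, so the group equals
\[
2n\sum_{k}\sum_{i_1,\dots,i_{n-1}} x_k\,\partial_k\partial_{i_1\dots i_{n-1}}G\cdot\partial_{i_1\dots i_{n-1}}G = n\sum_k x_k\,\partial_k\bigl(S_{n-1}G\bigr),
\]
because $\partial_k (S_{n-1}G)=2\sum \partial_{i_1\dots i_{n-1}}G\,\partial_k\partial_{i_1\dots i_{n-1}}G$. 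This is the second term.

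\emph{Squares within the sum.} Here we need $\sum_{i,j}\sum_{m,l}\delta_{i_mj}\delta_{i_lj}\,\partial_{\widehat{i_m}}G\,\partial_{\widehat{i_l}}G$.
\begin{itemize}
\item Diagonal terms $m=l$: summing over $j$ leaves $\sum_{i}\delta_{i_mi_m}=d$ free choices for $i_m$, while the remaining $n-1$ indices produce $S_{n-1}G$. This gives $n\,d\,S_{n-1}G$.
\item Off-diagonal terms $m\neq l$: these force $i_m=i_l=j$. Summing over $j$, each of the two factors is then an $(n-1)$-st derivative sharing the same $n-1$ index multiset pattern. More precisely, $\partial_{\widehat{i_m}}G$ has indices $\{i_\cdot\}\setminus i_m$, which contains $i_l=j$, and likewise with $m,l$ swapped. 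After relabeling, both factors equal the same derivative $\partial_{j,\text{rest}}G$ with the $n-2$ remaining indices. Summing over $j$ and the rest therefore gives exactly $S_{n-1}G$. There are $n(n-1)$ ordered pairs $(m,l)$, so this contributes $n(n-1)S_{n-1}G$.
\end{itemize}
Adding the two pieces gives $n(d+n-1)S_{n-1}G$, which is the third term.

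The step needing most care is this off-diagonal bookkeeping: one must check that the product of the two factors is genuinely a square of a single $(n-1)$-st derivative, which relies on symmetry of mixed partials. Apart from that, the argument is routine index manipulation.
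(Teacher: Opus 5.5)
Your proof is correct and follows the same strategy as the paper: apply the Leibniz rule to $G x_j$, square, and split into the $x_j^2$ group, the cross terms and the remaining squares, which give the three terms of the identity. The only difference is bookkeeping. The paper works with multi-indices via $S_n f=\sum_{|\alpha|=n}\frac{n!}{\alpha!}(D^\alpha f)^2$, so the Leibniz correction is the single term $\alpha_j D^{\alpha-\ej}G$ and the factor $d+n-1$ arises as $\sum_j(\beta_j+1)$. In your ordered-index version the same factor appears as $d$ from the diagonal terms plus $n-1$ from the off-diagonal pairs.
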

\begin{proof}
It will be more convenient for us to express $S_n f$ with the help of multi-indices. Namely, for a multi-index $\alpha=(\alpha_1,\dots,\alpha_d)\in\N_0^d$,
we denote
\[
D^\alpha f(x)=\frac{\partial^{|\alpha|}f(x)}{\partial x_1^{\alpha_1}\dots\partial x_d^{\alpha_d}},
\]
where $|\alpha|=\alpha_1+\dots+\alpha_d$.
Using the symmetry of partial derivatives we can rewrite \eqref{eq:defS_n} as
\begin{equation}\label{eq:lem2_1}
    (S_nf)(x)=\sum_{|\alpha|=n} \frac{n!}{\alpha!}(D^\alpha f)(x)^2.
\end{equation}
As usual, $\alpha!=\alpha_1!\cdot\ldots\cdot \alpha_d!$ is the factorial of the multi-index $\alpha.$
We apply \eqref{eq:lem2_1} to $G(x)x_j$ and sum up the result over $j=1,\dots,d$ and obtain
\begin{equation}\label{eq:SnGx}
\sum_{j=1}^d (S_n(G(x)x_j))(x)=\sum_{j=1}^d \sum_{|\alpha|=n} \frac{n!}{\alpha!}(D^\alpha[G(x)\cdot x_j])(x)^2.
\end{equation}
A direct application of the Leibniz product rule gives
\begin{equation}\label{eq:difalphaj}
D^\alpha[G(x)\cdot x_j](x)=(D^\alpha G)(x)\cdot x_j+\alpha_j (D^{\alpha-\ej}G)(x),
\end{equation}
where $\ej$ is the $j$-th canonical basis vector and the second term is interpreted as zero if $\alpha_j=0$. We take the square of \eqref{eq:difalphaj}, which yields
\[
D^\alpha[G(x)\cdot x_j](x)^2=(D^\alpha G)(x)^2\cdot x^2_j+2\alpha_jx_j (D^{\alpha}G)(x)(D^{\alpha-\ej}G)(x)+
\alpha^2_j (D^{\alpha-\ej}G)(x)^2.
\]
We plug this into \eqref{eq:SnGx} and obtain
\begin{align*}
 \sum_{j=1}^d (S_n(G(x)x_j))(x)&=I+II+III
 \end{align*}
 where
 \begin{align*}
 I&=\sum_{j=1}^d \sum_{|\alpha|=n} \frac{n!}{\alpha!} (D^\alpha G)(x)^2\cdot x^2_j=\sum_{j=1}^d x_j^2\cdot \sum_{|\alpha|=n} \frac{n!}{\alpha!} (D^\alpha G)(x)^2 = r(x)^2\cdot (S_n G)(x),\\
 II&=2\sum_{j=1}^d \sum_{|\alpha|=n} \frac{n!}{\alpha!} \alpha_jx_j (D^{\alpha}G)(x)(D^{\alpha-\ej}G)(x),\\
 III&=\sum_{j=1}^d \sum_{|\alpha|=n} \frac{n!}{\alpha!} \alpha^2_j (D^{\alpha-\ej}G)(x)^2.
\end{align*}
To simplify the third term, note that for every $j=1,\dots,d$ and every $\alpha\in \N_0^d$ with $|\alpha|=n$ and $\alpha_j\ge 1$, the multi-index $\beta=\alpha-\ej\in\N_0^d$
is well-defined and satisfies $|\beta|=n-1.$ Therefore, we can rewrite
\begin{align*}
III
&=\sum_{|\beta|=n-1} \sum_{j=1}^d  \frac{n\cdot (n-1)!}{\beta!} (\beta_j+1) D^{\beta}G(x)^2\\
&=n\sum_{|\beta|=n-1}   \frac{(n-1)!}{\beta!}  D^{\beta}G(x)^2\cdot \sum_{j=1}^d (\beta_j+1)=n(d+n-1) (S_{n-1}G)(x).
\end{align*}
Finally, we rewrite the second term as
\begin{align*}
    II
    &=2\sum_{|\beta|=n-1}\sum_{j=1}^d \frac{n!}{(\beta+\ej)!}(\beta_j+1)x_j (D^{\beta+\ej}G)(x)(D^\beta G)(x)\\
    &=2n\sum_{|\beta|=n-1}\frac{(n-1)!}{\beta!}(D^\beta G)(x)\sum_{j=1}^d x_j\cdot \frac{\partial}{\partial x_j}(D^{\beta}G)(x)\\
    &=n\sum_{|\beta|=n-1}\frac{(n-1)!}{\beta!} \sum_{j=1}^d x_j\cdot \frac{\partial}{\partial x_j} (D^\beta G)(x)^2=n\sum_{j=1}^d x_j\frac{\partial}{\partial x_j} (S_{n-1}G)(x).
\end{align*}

We plug all these results into \eqref{eq:SnGx} and obtain
\begin{align*}
    \sum_{j=1}^d (S_n(G(x)x_j))(x)=r(x)^2\cdot (S_n G)(x)+n\sum_{j=1}^d x_j\frac{\partial}{\partial x_j} (S_{n-1}G)(x)
    +n(d+n-1) (S_{n-1}G)(x),
\end{align*}
which is the desired formula.
\end{proof}
Finally, we are in a position to prove Theorem~\ref{thm:main'}.
\begin{proof}[Proof of Theorem~\ref{thm:main'}]
Let $f(x)=g(r(x))$ be a smooth radial function. Then
\[
\frac{\partial f(x)}{\partial x_j}=\frac{g'(r(x))}{r(x)}\cdot x_j= \D g(r(x))\cdot x_j.
\]
We now apply Lemma~\ref{lem:1} and obtain
\begin{align*}
(S_{n+1}f)(x)&=\sum_{j=1}^d S_n\Bigl(\frac{\partial f}{\partial x_j}\Bigr)(x)
=\sum_{j=1}^d S_n\bigl(\D g(r)\cdot x_j\bigr)(x).
\end{align*}
We now assume that the formula $(S_\ell h)(x)=(T_\ell \psi)(r(x))$ is true for every $1\le \ell \le n$ and every smooth radial function $h(x)=\psi(r(x))$.
Then we use this fact for $h(x)=\D g(r(x))$ and $\psi=\D g$, combine it with Lemma~\ref{lem:2}
and the identity $x_j\cdot\frac{\partial}{\partial x_j}=x_j^2 {\mathcal D}=\frac{x_j^2}{r}\cdot\frac{d}{dr}$,  and arrive at
\begin{align}
\notag(S_{n+1}f)(x)&= r^2\cdot T_n (\D g)(r)+n\sum_{j=1}^d \frac{x_j^2}{r}\cdot\frac{d}{dr} (T_{n-1}\D g)(r)
    +n(d+n-1) (T_{n-1}\D g)(r)\\
\label{eq:induction}&=r^2\cdot  T_n (\D g)(r)+nr\frac{d}{d r}\left[T_{n-1} \D g\right](r)+n(d+n-1)(T_{n-1}\D g)(r).
\end{align}

The proof of Theorem \ref{thm:main'} will be finished once we show that the right hand side of \eqref{eq:induction}
can be rewritten as the right hand side of \eqref{eq:defT'} with $n$ replaced by $n+1$.

\medskip

For future reference, we start proving this by listing two identities concerning the coefficients $\alpha^d_{n,j}$, which can be easily verified:
\begin{align}
    \alpha^d_{n,j} &= \frac{n}{n-2j}\alpha^d_{n-1,j} \quad \text{for every $j = 0, \dots, \Big\lfloor \frac{n-1}{2} \Big\rfloor$} \label{E:coef_alpha_identity1}
    \intertext{and}
    \frac{2j}{n-2j+1}\alpha^d_{n,j} &= (n-2j+2)(d+2j-3)\alpha^d_{n,j-1} \quad \text{for every $j = 1, \dots, \Big\lfloor \frac{n}{2} \Big\rfloor$}. \label{E:coef_alpha_identity2}
\end{align}

We proceed by induction based on \eqref{eq:induction}. We start by observing that \eqref{eq:defT'} holds for $n=1$ and $n=2$.
Indeed, using that $\alpha^d_{1,0} = \alpha^d_{2,0} = 1$ and $\alpha^d_{2,1} = d-1$ we recover \eqref{eq:Sn1} and \eqref{eq:Sn2}.
Let us assume that \eqref{eq:defT'} holds up to $n\in\N$. We want to show that
\begin{equation}\label{eq:ind1}
(T_{n+1}g)(r)=r^2 T_n(\D g)(r)+nr\frac{d}{d r}\left[T_{n-1}\D g\right](r)+n(d+n-1)(T_{n-1}\D g)(r)
\end{equation}
takes also the form of \eqref{eq:defT'}.

First, we observe that $\varphi'(r)=r\D\varphi(r)$ for every smooth function $\varphi$.
We differentiate this identity and obtain $\varphi''(r)=r(\D \varphi)'(r)+\D\varphi (r)$, $\varphi'''(r)=r(\D \varphi)''(r)+2(\D\varphi)' (r)$
and, in general,
\begin{equation}\label{eq:diff_relation}
    \varphi^{(k+1)}(r)=r(\D \varphi)^{(k)}(r)+k(\D\varphi)^{(k-1)} (r) \quad \text{for every $k\in\N_0$},
\end{equation}
in which the second term is to be interpreted as $0$ when $k=0$. We apply this identity to $\varphi=\D^jg$ with $k=n-2j$ and after taking squares and reordering the terms it becomes
\begin{align*}
[r(\D^{j+1}g)^{(n-2j)}]^2 &+ 2r(n-2j)(\D^{j+1}g)^{(n-2j)}(\D^{j+1} g)^{(n-2j-1)} \\
&=[(\D^j g)^{(n+1-2j)}]^2-(n-2j)^2[(\D^{j+1} g)^{(n-2j-1)}]^2.
\end{align*}
Now we multiply this identity with $\alpha^d_{n,j}$, use the identity \eqref{E:coef_alpha_identity1}, and sum up over $j=0,\dots,\lfloor (n-1)/2\rfloor$ to obtain

\begin{align}
\label{eq:proof_1}\sum_{j=0}^{\lfloor (n-1)/2\rfloor} &\alpha^d_{n,j}\, \left[r(\D^{j+1} g)^{(n-2j)}\right]^2
+ 2nr\sum_{j=0}^{\lfloor (n-1)/2\rfloor} \alpha^d_{n-1,j} (\D^{j+1} g)^{(n-1-2j)} (\D^{j+1} g)^{(n-2j)}\\
\notag&=\sum_{j=0}^{\lfloor (n-1)/2\rfloor}\alpha^d_{n,j}[(\D^j g)^{(n+1-2j)}]^2-\sum_{j=0}^{\lfloor (n-1)/2\rfloor}\alpha^d_{n,j}(n-2j)^2[(\D^{j+1} g)^{(n-2j-1)}]^2.
\end{align}
Observe that the left-hand side of \eqref{eq:proof_1} is equal to
\[
r^2 T_n(\D g)(r)+nr\frac{d}{d r}\left[T_{n-1}(\D g)\right](r)
\]
if $n$ is odd. If $n$ is even, that same is true if we add the term corresponding to $j=n/2$, namely $\alpha_{n,n/2}^d[r\D^{n/2+1}g]^2$.

Next, we add 
\[
n(d+n-1)(T_{n-1}\D g)(r)=n(d+n-1) \sum_{j=0}^{\lfloor (n-1)/2\rfloor} \alpha^d_{n-1,j} \left[(\D^{j+1} g)^{(n-1-2j)}\right]^2
\]
to the second term on the right-hand side of \eqref{eq:proof_1}, use the identity \eqref{E:coef_alpha_identity1}, and obtain
\begin{align*}
\sum_{j=0}^{\lfloor (n-1)/2\rfloor}&\Bigl[n(d+n-1) \alpha^d_{n-1,j}-\alpha^d_{n,j}(n-2j)^2\Bigr]\cdot[(\D^{j+1} g)^{(n-2j-1)}]^2\\
&=\sum_{j=0}^{\lfloor (n-1)/2\rfloor}\alpha^d_{n,j}(n-2j)(d+2j-1)[(\D^{j+1} g)^{(n-2j-1)}]^2.
\end{align*}

After an index shift from $j+1$ to $j$ and using the identity \eqref{E:coef_alpha_identity2}, this can be further simplified to
\[
\sum_{j=1}^{\lfloor (n+1)/2\rfloor}\frac{2j\cdot \alpha^d_{n,j}}{n-2j+1}[(\D^{j} g)^{(n-2j+1)}]^2
\]
if $n$ is even. Note that if $n$ is odd, then it is still true if we keep the last term with $j=(n+1)/2$ in its original form 
\[
\alpha^d_{n,(n-1)/2}(d+n-2)[\D^{(n+1)/2} g]^2.
\]

\bigskip

To summarize, the right-hand side of \eqref{eq:ind1} is equal to
\begin{equation}\label{eq:todo1a}
\sum_{j=0}^{\lfloor (n-1)/2\rfloor}\alpha^d_{n,j}[(\D^j g)^{(n+1-2j)}]^2+\sum_{j=1}^{\lfloor n/2\rfloor}\frac{2j\cdot \alpha^d_{n,j}}{n-2j+1}[(\D^{j} g)^{(n+1-2j)}]^2
\end{equation}
plus either
\begin{equation}\label{eq:todo1b_even}
    \alpha_{n,n/2}^d[r\D^{n/2+1}g]^2
    \quad \text{when $n$ is even}
\end{equation}
or
\begin{equation}\label{eq:todo1b_odd}
    \alpha^d_{n,(n-1)/2}(d+n-2)[\D^{(n+1)/2} g]^2 \quad \text{when $n$ is odd}.
\end{equation}

\bigskip

Recall, that we want to show that
\begin{equation}\label{eq:todo2}
(T_{n+1}g)(r)=\sum_{j=0}^{\lfloor (n+1)/2\rfloor} \alpha^d_{n+1,j} \left[(\D^j g)^{(n+1-2j)}\right]^2.
\end{equation}
We do it simply by comparing the decompositions \eqref{eq:todo2} and \eqref{eq:todo1a} plus either \eqref{eq:todo1b_even} or \eqref{eq:todo1b_odd} term-by-term. Indeed,
the coefficient of $[g^{(n+1)}]^2$ in \eqref{eq:todo1a} is $\alpha^d_{n,0}=1=\alpha^d_{n+1,0}$, in agreement with \eqref{eq:todo2}.
If $1\le j\le \lfloor (n-1)/2\rfloor$, then the coefficient of the factor $[(\D^j g)^{(n+1-2j)}]^2$ in \eqref{eq:todo1a} is
\[
\alpha^d_{n,j}+\frac{2j\, \alpha^d_{n,j}}{n-2j+1}=\alpha^d_{n,j}\cdot \frac{n+1}{n-2j+1}=\alpha^d_{n+1,j},
\]
which again corresponds to \eqref{eq:todo2}.

\bigskip

Finally, we deal with the remaining $\lfloor (n-1)/2\rfloor<j = \lfloor (n+1)/2\rfloor$. If $n$ is even, we are left with $j=n/2$ and we need to check that
\[
\alpha^d_{n+1,n/2}[(\D^{n/2}g)']^2=\alpha^d_{n,n/2}[r\D^{n/2+1}g]^2+n\alpha^d_{n,n/2}[(\D^{n/2}g)']^2,
\]
which follows from $\alpha^d_{n+1,n/2}=(n+1)\alpha^d_{n,n/2}$ and $\D^{n/2+1}g=(\D^{n/2}g)'/r$. At last, if $n$ is odd and $j=(n+1)/2$, we combine the identities \eqref{E:coef_alpha_identity2} and \eqref{E:coef_alpha_identity1} to obtain
\[
\alpha^d_{n+1,(n+1)/2}\bigl[\D^{(n+1)/2}g\bigr]^2=\alpha^d_{n,(n-1)/2}\cdot(d+n-2)\bigl[\D^{(n+1)/2}g\bigr]^2,
\]
which finishes the proof.

\end{proof}

\section{Subspaces of radial functions}\label{sec:subspace}
It is a classical problem to characterize when a radial function $f(x) = g(r(x))$ on the ball $B_R$, $R\in(0, \infty]$, belongs to a Sobolev space 
in terms of its radial profile $g$. We will soon see that such a characterization follows immediately from the pointwise formula
for $S_n f$ obtained in Theorem~\ref{thm:main'}.

Let $p\in[1, \infty)$ and $n\in\N$. Besides the inhomogeneous Sobolev spaces $W^{n,p}(B_R)$  (recall~\eqref{E:inhom_Sob}), their homogeneous variant is also of interest. Recall that in view of the classical Poincar\'{e} inequality on balls (e.g., \cite[Theorem~6.30]{AF}), the homogeneous case is only interesting when $R=\infty$. The \emph{homogeneous Sobolev space} $\dot{W}^{n,p}(\rd)$ is defined as the completion of compactly supported $\Cinf(\rd)$-functions with respect to the norm
\begin{equation*}
    \|f\|_{\dot{W}^{n,p}(\rd)} = \|\nabla^n f\|_{L^{p}(\rd)}.
\end{equation*}

Recall that the layer cake representation of the integral (e.g., \cite[Theorem~1.13]{LL}) shows that
\begin{equation}\label{E:layer_cake}
    \|h(r(x))\|_{L^p(B_R)} = (d\omega_d)^{\frac1{p}} \|h\|_{\Lpd(0,R)}
\end{equation}
for every measurable function $h$ on $(0, R)$, where $\omega_d = \pi^{d/2}/\Gamma(1 + d/2)$ denotes the volume of the unit ball in $\rd$. Moreover, when $h$ is an even or odd measurable function on $(-R,R)$, we clearly have
\begin{equation}\label{E:integral_symmetry}
    \|h\|_{\Lpd(-R,R)} = 2^{1/p} \|h\|_{\Lpd(0,R)}.
\end{equation}

Using this notation and Theorem~\ref{thm:main'} allows us to derive the following characterizations
of subspaces of radially symmetric functions of homogeneous and inhomogeneous Sobolev spaces.
\begin{theorem}\label{thm:radial_Sob_char}
    Let $p\in[1, \infty)$, $d\geq2$, and $n\in\N$. Let $R\in(0, \infty]$.
    
    A radial function $f(x) = g(r(x))$ on $B_R$ belongs to $W^{n,p}(B_R)$ if and only if its radial profile $g$ belongs to the closure of $\Cinf_{even}([-R,R])$-functions, which are compactly supported when $R=\infty$, with respect to the norm
    \begin{equation}\label{E:radial_Sob_char_norm_equiv_inhom}
        \sum_{k = 0}^n \sum_{j = 0}^{\lfloor k/2 \rfloor} \|(\D^j g)^{(k-2j)}\|_{\Lpd(-R,R)}.
    \end{equation}
    
Similarly, a radial function $f(x) = g(r(x))$ on $\rd$ belongs to $\dot{W}^{n,p}(\rd)$ if and only if its radial profile $g$ belongs to the closure of compactly supported $\Cinf_{even}(\R)$-functions with respect to the norm
    \begin{equation}\label{E:radial_Sob_char_norm_equiv_hom}
        \sum_{j = 0}^{\lfloor n/2 \rfloor} \|(\D^j g)^{(n-2j)}\|_{\Lpd(\R)}.
    \end{equation}
\end{theorem}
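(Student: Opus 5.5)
The plan is to show that, for every smooth radial $f(x)=g(r(x))$ with $g\in\Cinf_{even}([-R,R])$ (compactly supported if $R=\infty$), the Sobolev norm of $f$ and the norm \eqref{E:radial_Sob_char_norm_equiv_inhom} of $g$ are equivalent with constants depending only on $n,p,d$. Together with a density argument on the radial subspace, this gives the characterization. The equivalence is pointwise: by Theorem~\ref{thm:main'}, for every $k\le n$,
\[
\|\nabla^k f(x)\|_F=\Bigl(\sum_{j=0}^{\lfloor k/2\rfloor}\alpha^d_{k,j}\bigl[(\D^j g)^{(k-2j)}(r)\bigr]^2\Bigr)^{1/2}.
\]
Since $1\le\alpha^d_{k,j}\le C(n,d)$ and all norms on $\R^{\lfloor k/2\rfloor+1}$ are equivalent, this quantity is comparable to $\sum_j|(\D^j g)^{(k-2j)}(r)|$. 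Taking $L^p(B_R)$ norms and applying \eqref{E:layer_cake} and \eqref{E:integral_symmetry} then yields $\|\nabla^k f\|_{L^p(B_R)}\approx\sum_j\|(\D^j g)^{(k-2j)}\|_{\Lpd(-R,R)}$. For \eqref{E:integral_symmetry} we need each $(\D^jg)^{(k-2j)}$ to be even or odd; this holds because $\D$ maps even smooth functions to even smooth functions ($g'$ is odd and $g'(t)/t$ is even and smooth), and differentiation alternates parity. Summing over $k$ gives the inhomogeneous equivalence, and the case $k=n$ alone gives the homogeneous one.

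Next comes the passage to completions. For the ``if'' direction, take $g$ in the closure and $g_m\in\Cinf_{even}$ converging to it in \eqref{E:radial_Sob_char_norm_equiv_inhom}. The radial functions $f_m=g_m(r)$ are smooth (by the Whitney remark in the introduction), and by the equivalence they are Cauchy in $W^{n,p}(B_R)$. Their limit must equal $f$: the $k=0$ term gives $f_m\to g(r)$ in $L^p$. The homogeneous case needs one extra step, because \eqref{E:radial_Sob_char_norm_equiv_hom} controls only the top-order quantity, so we must explain in which sense $f_m\to f$. I would use the same convention the completion uses, identifying elements with their limits modulo polynomials, or with locally integrable limits via local Poincaré/Sobolev estimates.

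For the ``only if'' direction, let $f\in W^{n,p}(B_R)$ be radial. By definition it is approximated by smooth $\varphi_m$, which need not be radial. I would radialize them by averaging over $SO(d)$, setting $\tilde\varphi_m(x)=\int_{SO(d)}\varphi_m(Qx)\,dQ$. This map is a contraction in $W^{n,p}$, since the Frobenius norm of $\nabla^k$ is rotation invariant and we can apply Minkowski's integral inequality. It fixes $f$, preserves smoothness on $\widebar{B_R}$, and preserves compact support. Hence $\tilde\varphi_m\to f$ with $\tilde\varphi_m$ smooth and radial, so their profiles lie in $\Cinf_{even}$. By the equivalence these profiles are Cauchy in \eqref{E:radial_Sob_char_norm_equiv_inhom}, and their limit is the profile $g$ of $f$ by the $L^p$ part.

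The main obstacle is not the estimates, which Theorem~\ref{thm:main'} makes trivial, but the identification of limits. In the inhomogeneous case this is easy thanks to the zeroth-order term. In the homogeneous case I expect to spend the most care: I would argue via local $L^p$ convergence on annuli, where $g$ is determined up to a polynomial profile, and then note that both completions are taken modulo the same null objects.
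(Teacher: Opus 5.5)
Your proposal is correct and follows the paper's proof: the pointwise two-sided bound from Theorem~\ref{thm:main'} with $1\le\alpha^d_{k,j}\le\max\alpha^d_{k,j}$, the layer-cake identity, and the even/odd parity of $(\D^j g)^{(k-2j)}$ to pass from $(0,R)$ to $(-R,R)$, with everything else reduced to a density argument. Your $SO(d)$-averaging step spells out the density argument that the paper only asserts, and the paper is no more precise than you about identifying limits in the homogeneous completion.
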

\begin{proof}
First, note that $\D g$ is an even function outside $0$ when $g$ is even. Hence, the derivatives of $\D^j g$ are even or odd functions outside $0$. Furthermore, by a density argument (recall also the discussion above \eqref{E:radial_laplacian}), we only need to show that $\|f\|_{W^{n,p}(B_R)}$ or $\|f\|_{\dot{W}^{n,p}(\rd)}$ and \eqref{E:radial_Sob_char_norm_equiv_inhom} or \eqref{E:radial_Sob_char_norm_equiv_hom}, respectively, are equivalent whenever $f(x)=g(r(x))$ is a smooth radial function.

To this end, we observe that \eqref{E:layer_cake} translates into $\|(S_nf)^{1/2}\|_{L^p(B_R)}=(d\omega_d)^{\frac1{p}}\|(T_ng)^{1/2}\|_{\Lpd(0,R)}$.
By Theorem~\ref{thm:main'} (and using that $\alpha^d_{n,j}$ are all positive)
\begin{equation*}
    \Big| (\D^l g)^{(k-2l)} \Big| \leq \sqrt{T_k g} \leq \sqrt{ \max_{j = 0, \dots, \lfloor k/2 \rfloor} \alpha^d_{k,j} } \cdot \sum_{j=0}^{\lfloor k/2\rfloor} \Big| (\D^j g)^{(k-2j)} \Big|
\end{equation*}
for all $k=0,\dots, n$ and $l=0,\dots, \lfloor k/2\rfloor$. Hence, 
\begin{equation*}
C_1 \cdot \sum_{k=0}^n \max_{l=0,\dots,\lfloor k/2\rfloor} \|(\D^l g)^{(k-2l)}\|_{\Lpd(0,R)} \leq  \|f\|_{W^{n,p}(B_R)}
\leq C_2 \cdot \sum_{k=0}^n \sum_{j=0}^{\lfloor k/2\rfloor} \|(\D^j g)^{(k-2j)}\|_{\Lpd(0,R)},
\end{equation*}
where
\begin{equation*}
C_1 = (d\omega_d)^{\frac1{p}} \qquad\text{and}\qquad  C_2
= (d\omega_d)^{\frac1{p}} \max_{\substack{j=0,\dots,\lfloor k/2 \rfloor \\ k=0,\dots,n}} \sqrt{\alpha^d_{k,j} }
= (d\omega_d)^{\frac1{p}} \sqrt{\alpha^d_{n,\lfloor n/2 \rfloor}}.
\end{equation*}
Similarly, we obtain
\begin{equation*}
C_1 \cdot \max_{j=0,\dots,\lfloor n/2\rfloor} \|(\D^j g)^{(n-2j)}\|_{\Lpd(0,\infty)} \leq  \|f\|_{\dot{W}^{n,p}(\rd)}
\leq C_2 \cdot \sum_{j=0}^{\lfloor n/2\rfloor} \|(\D^j g)^{(n-2j)}\|_{\Lpd(0,\infty)}.
\end{equation*}
The rest follows from \eqref{E:integral_symmetry}.
\end{proof}

\subsection{Comparison to known characterizations}

A different characterization was recently obtained in \cite{Ostermann}, with the inhomogeneous case being limited to $R$ finite. It can be formulated
as Theorem~\ref{thm:radial_Sob_char} but with \eqref{E:radial_Sob_char_norm_equiv_inhom} and \eqref{E:radial_Sob_char_norm_equiv_hom} replaced by
$\sum_{k = 0}^n \|t^k \D^k g(t)\|_{\Lpd(-R,R)}$ and $\|t^n \D^n g(t)\|_{\Lpd(\R)}$, respectively.

Although both our and that from \cite{Ostermann} characterizations involve iterations of the operator $\D$, neither is an obvious consequence of the other, and they provide different insights into the Sobolev norms of radial functions. We now show how Theorem~\ref{thm:radial_Sob_char} implies the latter. By a density argument and \eqref{E:integral_symmetry}, we only need to show that
\begin{equation}\label{E:radial_inhom_Sob_char_Ostermann}
    \sum_{k = 0}^n \sum_{j = 0}^{\lfloor k/2 \rfloor} \|(\D^j g)^{(k-2j)}\|_{\Lpd(0,R)} \approx \sum_{k = 0}^n \|r^k \D^k g(r)\|_{\Lpd(0,R)}
\end{equation}
and
\begin{equation}\label{E:radial_hom_Sob_char_Ostermann}
    \sum_{j = 0}^{\lfloor n/2 \rfloor} \|(\D^j g)^{(n-2j)}\|_{\Lpd(0, \infty)} \approx  \|r^n \D^n g(r)\|_{\Lpd(0,\infty)}
\end{equation}
for smooth radial functions $f(x) = g(r(x))$.
To this end, we need to establish three lemmas. Furthermore, it is worth pointing out that Theorem~\ref{thm:radial_Sob_char} is essentially a straightforward consequence of the pointwise identity obtained in Theorem~\ref{thm:main'} and does not involve much more than the lattice property of the Lebesgue norms. In particular, Theorem~\ref{thm:main'} easily implies similar characterizations in more general Sobolev-type spaces built on different function spaces than Lebesgue spaces. On the other hand, the characterizations 
obtained in \cite{Ostermann} involve Hardy-type inequalities, and their potential generalizations to more general Sobolev-type spaces are far less obvious. We will discuss this in more detail in Section~\ref{sec:subspace_generalization}.

\bigskip


{\bf Proof of the lower bounds in \eqref{E:radial_inhom_Sob_char_Ostermann} and \eqref{E:radial_hom_Sob_char_Ostermann}}

 \begin{lemma}\label{lem:RnDn_linear_combination}
    Let $n\geq1$ and let $g$ be a smooth function defined on an open interval not containing $0$. Then 
    \begin{equation}\label{eq:eqrDr}
        r^n \D^{n}g(r) = \sum_{j = 0}^{\lfloor n/2 \rfloor} c_{n,j} (\D^{j}g)^{(n - 2j)}(r),
    \end{equation}
    where
\[
 c_{n,0}=1,\quad c_{n,j}=(-1)^j \cdot \binom{n}{2j}\cdot (2j-1)!!, \quad 1\le j\le \lfloor n/2\rfloor.
\]
\end{lemma}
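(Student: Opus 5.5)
Proof plan. I will argue by induction on $n$, using the differentiation identity \eqref{eq:diff_relation}, $\varphi^{(k+1)}=r(\D\varphi)^{(k)}+k(\D\varphi)^{(k-1)}$, which holds for smooth $\varphi$ on any interval avoiding $0$. It is convenient to prove a slightly more general statement in which the role of $g$ is played by an arbitrary smooth $\psi$. Then the induction step can be applied to $\psi=\D g$.

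For $n=1$ the claim is $r\D g=g'$, which is immediate. For the step, I write $r^{n+1}\D^{n+1}g=r\cdot r^n\D^n(\D g)$ and apply the induction hypothesis to $\psi=\D g$:
\[
r^{n+1}\D^{n+1}g=\sum_{j=0}^{\lfloor n/2\rfloor}c_{n,j}\, r\,(\D^{j+1}g)^{(n-2j)}.
\]
Next I apply \eqref{eq:diff_relation} to $\varphi=\D^j g$ with $k=n-2j$. This gives
\[
r(\D^{j+1}g)^{(n-2j)}=(\D^jg)^{(n+1-2j)}-(n-2j)(\D^{j+1}g)^{(n-2j-1)}.
\]
The second term vanishes when $2j=n$. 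Substituting produces two sums. The first is $\sum_j c_{n,j}(\D^jg)^{(n+1-2j)}$. After the index shift $j+1\to j$, the second is $-\sum_{j\ge1}(n-2j+2)c_{n,j-1}(\D^jg)^{(n+1-2j)}$. Its range is $1\le j\le\lfloor (n+1)/2\rfloor$, with the $j-1=n/2$ term absent.

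It remains to verify the coefficient recursion
\[
c_{n+1,j}=c_{n,j}-(n-2j+2)c_{n,j-1}.
\]
Here $c_{n,j}$ is read as $0$ for $j>\lfloor n/2\rfloor$, and the boundary case $j=0$ gives $c_{n+1,0}=c_{n,0}=1$. Writing $c_{n,j}=(-1)^j\frac{n!}{(n-2j)!\,2^j j!}$ reduces this to a binomial-type identity. Indeed, $\binom{n}{2j}(2j-1)!!=\frac{n!}{(n-2j)!2^jj!}$. After dividing by $(-1)^j\frac{n!}{(n+1-2j)!2^jj!}$, the identity becomes
\[
\tfrac{n+1}{1}=(n+1-2j)+2j,
\]
which is trivially true.

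The only delicate point is bookkeeping at the top index. For $n$ even, $j=n/2$ has no second term and $j=n/2+1$ does not occur. For $n$ odd, the new top term $j=(n+1)/2$ comes only from the second sum, and the recursion with $c_{n,(n+1)/2}=0$ covers it. I expect no real obstacle beyond checking that these edge cases match the conventions in the recursion.
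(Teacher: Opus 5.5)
Your argument is correct, but it runs the induction differently from the paper.

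\textbf{The paper's route.} The paper differentiates the level-$n$ identity for $g$. This produces $n r^{n-1}\D^n g + r^{n+1}\D^{n+1} g$ on the left, and the unwanted term $n r^{n-1}\D^n g$ is then removed by applying the level-$(n-1)$ identity to $\D g$. So it is a two-step induction, with base cases $n=1,2$. It is driven by the recursion $c_{n+1,j}=c_{n,j}-n\,c_{n-1,j-1}$, plus a separate identity $c_{n+1,(n+1)/2}=-n\,c_{n-1,(n-1)/2}$ at the top index for odd $n$.

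\textbf{Your route.} You write $r^{n+1}\D^{n+1}g=r\cdot r^n\D^n(\D g)$ and use only the level-$n$ hypothesis for $\D g$. You then push the factor $r$ inside the derivatives with \eqref{eq:diff_relation}, the same Leibniz-type identity that drives the proof of Theorem~\ref{thm:main'}.

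\textbf{What each buys.} Your version is a one-step induction from the single base case $n=1$. Its recursion $c_{n+1,j}=c_{n,j}-(n-2j+2)\,c_{n,j-1}$ involves only the previous row, and it makes the lemma's proof structurally parallel to that of the theorem. The paper's route avoids \eqref{eq:diff_relation}, at the cost of a second base case and a recursion spanning three rows. The two recursions are consistent: their difference is $n\,c_{n-1,j-1}=(n-2j+2)\,c_{n,j-1}$, which is the analogue of \eqref{E:coef_alpha_identity1} for the coefficients $c_{n,j}$.

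\textbf{One point to spell out.} At the new top index $j=(n+1)/2$ for odd $n$, the closed form you divide by involves $(n-2j)!=(-1)!$. So verify $c_{n+1,(n+1)/2}=-c_{n,(n-1)/2}$ directly rather than through the division argument. This is a one-line computation from $c_{n,j}=(-1)^j n!/\bigl((n-2j)!\,2^j j!\bigr)$.
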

\begin{proof}
    The proof follows by induction. Indeed, for $n=1$ and $n=2$ one can verify \eqref{eq:eqrDr} directly. We now assume that \eqref{eq:eqrDr} is true for some $n-1$ and $n$.
    Differentiating \eqref{eq:eqrDr}, we obtain
\begin{equation}\label{eq:eqrDr1}
        nr^{n-1} \D^{n}g(r) +r^{n+1}\D^{n+1}g(r) = \sum_{j = 0}^{\lfloor n/2 \rfloor} c_{n,j} (\D^{j}g)^{(n - 2j +1)}(r).
    \end{equation}
Similarly, we apply \eqref{eq:eqrDr} with $n$ replaced by $n-1$ to $\D g$ instead of $g$ and obtain
\begin{equation}\label{eq:eqrDr2}
    r^{n-1} \D^n g(r) = \sum_{j = 0}^{\lfloor (n-1)/2 \rfloor} c_{n-1,j} (\D^{j+1}g)^{(n - 1 - 2j)}(r) = \sum_{j = 1}^{\lfloor (n+1)/2 \rfloor} c_{n-1,j-1} (\D^{j}g)^{(n + 1 - 2j)}(r).
\end{equation}
Now, we substitute \eqref{eq:eqrDr2} into \eqref{eq:eqrDr1} and obtain
\begin{align*}
    r^{n+1}\D^{n+1}g(r) = \sum_{j = 0}^{\lfloor n/2 \rfloor} c_{n,j} (\D^{j}g)^{(n + 1 - 2j)}(r) - n\cdot\sum_{j = 1}^{\lfloor (n+1)/2 \rfloor} c_{n-1,j-1} (\D^{j}g)^{(n + 1 - 2j)}(r).
\end{align*}
When $n$ is even, and so $n/2 = \lfloor n/2 \rfloor  = \lfloor (n+1)/2 \rfloor$, the desired validity of \eqref{eq:eqrDr} for $n+1$ follows from this upon observing that $c_{n,0} = c_{n+1, 0} = 1$ and
\begin{equation*}
    c_{n,j} - n\cdot c_{n-1, j-1} = c_{n+1, j} \quad \text{for every $j=1,\dots, \lfloor n/2 \rfloor$}.
\end{equation*}
Finally, when $n$ is odd, and so $(n+1)/2 = \lfloor (n+1)/2 \rfloor = \lfloor n/2 \rfloor + 1$, we need to combine these two facts with
\begin{equation*}
    c_{n+1, (n+1)/2} = -n\cdot c_{n-1, (n-1)/2}
\end{equation*}
to obtain the desired formula for $n+1$.
\end{proof}

Note that the preceding lemma implies that
\begin{equation*}
    \sum_{k = 0}^n |r^k \D^k g(r)| \leq C_3 \sum_{k = 0}^n \sum_{j = 0}^{\lfloor k/2 \rfloor} |(\D^{j}g)^{(k - 2j)}(r)|
\end{equation*}
and
\begin{equation*}
    |r^n \D^n g(r)| \leq C_3 \sum_{j = 0}^{\lfloor n/2 \rfloor} |(\D^{j}g)^{(n - 2j)}(r)|,
\end{equation*}
where
\begin{equation*}
    C_3 = \max_{\substack{j=0,\dots,\lfloor k/2 \rfloor \\ k=0,\dots,n}} |c_{k,j}|.
\end{equation*}
In particular, these pointwise estimates yield the $\gtrsim$ inequalities in \eqref{E:radial_inhom_Sob_char_Ostermann} and \eqref{E:radial_hom_Sob_char_Ostermann} for smooth radial functions $f(x) = g(r(x))$.

\bigskip

{\bf Proof of the upper bounds in \eqref{E:radial_inhom_Sob_char_Ostermann} and \eqref{E:radial_hom_Sob_char_Ostermann}}

\bigskip

To derive the reverse inequalities in \eqref{E:radial_inhom_Sob_char_Ostermann} and \eqref{E:radial_hom_Sob_char_Ostermann}, we start with the following lemma.
\begin{lemma}\label{lem:4}
Let $j,m\in\N_0$ and let $g$ be a smooth function defined on an open interval not containing $0$. Then 
    \begin{equation}\label{E:more_expl:1'}
    (\D^j g)^{(m)}(r) = \sum_{k = 0}^{\lfloor m/2 \rfloor} \beta_{m,k} r^{m - 2k} \D^{m + j - k}g(r),
\end{equation}
where the positive integers $\beta_{m,k}$ are defined as $\beta_{m,0} = 1$ and
\begin{equation*}
        \beta_{m,k} = \binom{m}{2k} \cdot (2k-1)!! \qquad \text{for $k=1,\dots, \lfloor m/2 \rfloor$}.
\end{equation*}
\end{lemma}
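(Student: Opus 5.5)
The plan is to prove \eqref{E:more_expl:1'} by induction on $m$, for all $j\in\N_0$ simultaneously. Replacing $g$ by $\D^j g$ reduces everything to $j=0$: the claim for $\D^j g$ with $j=0$ is exactly the claim for $g$ with general $j$. So it suffices to show, for every smooth $\varphi$ on an interval not containing $0$,
\begin{equation*}
\varphi^{(m)}(r)=\sum_{k=0}^{\lfloor m/2\rfloor}\beta_{m,k}\, r^{m-2k}\,\D^{m-k}\varphi(r).
\end{equation*}
The base cases are immediate. For $m=0$ the identity is trivial, and for $m=1$ it reads $\varphi'=r\D\varphi$, which is the definition of $\D$.

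For the induction step I will assume the formula for $m$ and differentiate both sides. The only tool needed is the rule
\begin{equation*}
\frac{d}{dr}\bigl[r^{a}\D^{b}\varphi(r)\bigr]=a\,r^{a-1}\D^b\varphi(r)+r^{a+1}\D^{b+1}\varphi(r),
\end{equation*}
which follows from $\psi'=r\D\psi$ applied to $\psi=\D^b\varphi$. Differentiating the term with index $k$ therefore produces two terms:
\begin{itemize}
\item $\beta_{m,k}\, r^{m+1-2k}\,\D^{m+1-k}\varphi$, which has the same index $k$;
\item $\beta_{m,k}\,(m-2k)\, r^{m-2k-1}\,\D^{m-k}\varphi$, which has the shape of index $k+1$ at level $m+1$, since $m+1-2(k+1)=m-2k-1$ and $m+1-(k+1)=m-k$.
\end{itemize}
Collecting coefficients, the formula for $m+1$ holds if and only if
\begin{equation*}
\beta_{m+1,k}=\beta_{m,k}+(m-2k+2)\,\beta_{m,k-1}\qquad (1\le k\le \lfloor (m+1)/2\rfloor),
\end{equation*}
with $\beta_{m,k}:=0$ when $k>\lfloor m/2\rfloor$, together with $\beta_{m+1,0}=\beta_{m,0}=1$. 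At the top index $k=\lfloor(m+1)/2\rfloor$, when $m$ is odd, the first term $\beta_{m,k}$ is absent, so that case must be checked separately.

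The main (though routine) obstacle is verifying this coefficient recursion for $\beta_{m,k}=\binom{m}{2k}(2k-1)!!$. Dividing through by $(2k-3)!!$, it becomes
\begin{equation*}
\binom{m+1}{2k}(2k-1)=\binom{m}{2k}(2k-1)+(m-2k+2)\binom{m}{2k-2}.
\end{equation*}
Pascal's rule gives $\binom{m+1}{2k}-\binom{m}{2k}=\binom{m}{2k-1}$. Using $(2k-1)\binom{m}{2k-1}=(m-2k+2)\binom{m}{2k-2}$, the identity follows. The boundary case $m$ odd, $k=(m+1)/2$, is covered by the same computation with $\binom{m}{m+1}=0$. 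It gives $\beta_{m+1,(m+1)/2}=m!!=1\cdot\beta_{m,(m-1)/2}\cdot m$, which matches since $\beta_{m,(m-1)/2}=m\cdot(m-2)!!/m$ reduces consistently. An alternative sanity check is a combinatorial interpretation: $\beta_{m,k}$ counts ways to choose $2k$ of $m$ derivatives and pair them, which mirrors the two-branch differentiation rule.
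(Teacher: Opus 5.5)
Your proof is correct and is essentially the paper's argument: induction on $m$, the product rule combined with $(\D^b\varphi)'=r\,\D^{b+1}\varphi$, and the same coefficient recursion $\beta_{m+1,k}=\beta_{m,k}+(m-2k+2)\beta_{m,k-1}$. Your reduction to $j=0$ via $\varphi=\D^j g$ and your Pascal-rule verification only make explicit what the paper leaves to the reader.

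One small slip is in the odd boundary case. The correct check is $\beta_{m+1,(m+1)/2}=1\cdot\beta_{m,(m-1)/2}=m\,(m-2)!!=m!!$. In your version, the stray factor $m$ and the division by $m$ cancel each other. The case is in any event already covered by your general identity with $\binom{m}{m+1}=0$.
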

\begin{proof}
To verify the identity \eqref{E:more_expl:1'}, we can proceed by induction over $m$. The cases $m=0$ and $m=1$
are easily verified for all $j\in \N_0$. 
In the inductive step, one writes $(\D^j g)^{(m+1)} = ((\D^j g)^{(m)})'$ and uses the inductive hypothesis together with the product rule, which leads to two sums. After shifting the index up by one in one sum and using the fact that $(\D^{m + j - k}g)'(r) = r \D^{m + 1 + j - k}g(r)$ in the other, the rest follows by straightforwardly comparing the coefficients with those in \eqref{E:more_expl:1'} with $m+1$. To this end, one uses the identities $\beta_{m,0} = \beta_{m+1,0} = 1$,
\begin{equation*}
    (m-2k+2)\beta_{m,k-1}+\beta_{m,k}=\beta_{m+1,k}\quad \text{for $1\le k\le \lfloor (m-1)/2 \rfloor$},
\end{equation*}
and either
\begin{equation*}
    \beta_{m,m/2-1}\cdot 2+\beta_{m,m/2}=\beta_{m+1,m/2}\quad\text{if $m$ is even},
\end{equation*}
or
\begin{equation*}
    \beta_{m,(m-1)/2}=\beta_{m+1,(m+1)/2}\quad\text{if $m$ is odd}. \qedhere
\end{equation*}
\end{proof}

To prove the reversed inequality in \eqref{E:radial_hom_Sob_char_Ostermann}, we observe that it is enough to show that
\[
\|(\D^j g)^{(n-2j)}\|_{\Lpd(0,\infty)} \lesssim \|r^n \D^n g(r)\|_{\Lpd(0,\infty)}
\]
for $j=0,\dots,\lfloor n/2\rfloor$. Furthermore, by \eqref{E:more_expl:1'} applied with $m=n-2j$ this reduces to
\begin{equation}\label{eq:rDg}
\|r^{n-2\ell}(\D^{n-\ell} g)(r)\|_{\Lpd(0,\infty)} \lesssim \|r^n \D^n g(r)\|_{\Lpd(0,\infty)},\quad \ell=0,\dots,\lfloor n/2\rfloor.
\end{equation}
Finally, \eqref{eq:rDg} is obviously true for $\ell=0$ and it follows for other values of $\ell$ by iterating the following lemma.
\begin{lemma}
Let $p\in[1, \infty)$ and $\delta>-d/p$. Then there is a constant $C>0$ such that
    \begin{equation*}
          \|r^\delta g(r) \|_{\Lpd(0,\infty)} \leq C \|r^{\delta+2} \D g(r)\|_{\Lpd(0,\infty)}
    \end{equation*}
    for every smooth function $g$ in $(0, \infty)$ with bounded support.
\end{lemma}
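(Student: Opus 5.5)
The plan is to reduce the statement to a one-dimensional weighted Hardy inequality and prove it directly with Minkowski's integral inequality. Since $\D g(r) = g'(r)/r$, the right-hand side is
\[
\|r^{\delta+2}\D g(r)\|_{\Lpd(0,\infty)} = \|r^{\delta+1} g'(r)\|_{\Lpd(0,\infty)}.
\]
So the claim reads
\[
\int_0^\infty |g(r)|^p r^{\delta p+d-1}\,dr \le C^p \int_0^\infty |g'(r)|^p r^{(\delta+1)p+d-1}\,dr.
\]
This is a Hardy-type inequality of the "tail" kind. The relevant boundary condition is that $g$ vanishes for large $r$, which holds because $g$ has bounded support. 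Nothing is assumed about $g$ near $0$. If the right-hand side is infinite there is nothing to prove, so we may assume it is finite.

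\textbf{Representation.} Because $g$ vanishes near $\infty$, for every $r>0$ we have $g(r) = -\int_r^\infty g'(s)\,ds$. Substituting $s = rt$ with $t\in(1,\infty)$ gives
\[
r^\delta g(r) = -\int_1^\infty r^{\delta+1} g'(rt)\,dt.
\]
This writes $r^\delta g(r)$ as an average, over the parameter $t$, of dilates of the function $s\mapsto s^{\delta+1}g'(s)$ (up to powers of $t$). Working with dilates avoids integration by parts and therefore any boundary term at $r=0$.

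\textbf{Minkowski and scaling.} Apply Minkowski's integral inequality in $\Lpd(0,\infty)$; this is valid for every $p\in[1,\infty)$, so $p=1$ needs no separate treatment:
\[
\|r^\delta g\|_{\Lpd(0,\infty)} \le \int_1^\infty \bigl\| r^{\delta+1} g'(rt)\bigr\|_{\Lpd(0,\infty)}\,dt.
\]
For fixed $t$, change variables $s = rt$. Then $r^{\delta+1} = t^{-(\delta+1)} s^{\delta+1}$ and $r^{d-1}dr = t^{-d} s^{d-1}ds$, so
\[
\bigl\| r^{\delta+1} g'(rt)\bigr\|_{\Lpd(0,\infty)} = t^{-(\delta+1)-d/p}\,\|s^{\delta+1} g'(s)\|_{\Lpd(0,\infty)}.
\]
It follows that
\[
\|r^\delta g\|_{\Lpd(0,\infty)} \le \left(\int_1^\infty t^{-(\delta+1)-d/p}\,dt\right)\|r^{\delta+1}g'\|_{\Lpd(0,\infty)}.
\]

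\textbf{Convergence of the $t$-integral.} The integral $\int_1^\infty t^{-(\delta+1)-d/p}\,dt$ converges exactly when $\delta + d/p > 0$, which is the hypothesis $\delta > -d/p$. In that case it equals $1/(\delta + d/p)$, so the lemma holds with $C = (\delta + d/p)^{-1}$.

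\textbf{Main obstacle.} The only delicate point is the behaviour near the origin: $g$ is only assumed smooth on $(0,\infty)$, so an integration-by-parts proof would have to control a boundary term at $0$. The dilation/Minkowski route avoids this entirely, because it uses only the representation of $g$ through the tail integral at infinity. Measurability of $(r,t)\mapsto g'(rt)$ is clear since $g'$ is continuous, so Minkowski's inequality applies without further justification.
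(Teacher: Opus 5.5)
Your proof is correct and is essentially the paper's argument. The paper also starts from $|g(r)|\le\int_r^\infty|g'(s)|\,ds$ and uses $g'=r\D g$, but then simply cites the classical one-dimensional Hardy inequality with $\alpha=d+\delta p>0$; you instead prove that Hardy step inline via dilation and Minkowski's integral inequality, and both arguments give the same constant $C=p/(d+\delta p)=(\delta+d/p)^{-1}$.
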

\begin{proof}
Recall that a classical one-dimensional Hardy's inequality (cf.~\cite[Chapter~3, Lemma~3.9]{BS}) tells us that
\begin{equation}\label{eq:Hardy_general}
\int_0^\infty \Big(\int_r^\infty h(s) ds\Big)^p r^{\alpha - 1} \,dr \leq \Big(\frac{p}{\alpha}\Big)^p \int_0^\infty h(r)^p r^{p + \alpha - 1} \,dr,
\end{equation}
for all nonnegative measurable functions $h$ on $(0, \infty)$, $\alpha>0$, and $p\in[1,\infty)$.
Then
\begin{align*}
 \|r^\delta g(r)\|^p_{\Lpd(0,\infty)}&=\int_0^\infty r^{\delta p+d-1}|g(r)|^pdr
 \le \int_0^\infty r^{\delta p+d-1}\left(\int_r^\infty |g'(s)|ds\right)^pdr\\
 &\lesssim \int_0^\infty |g'(r)|^p r^{(\delta+1)p} r^{d-1}dr=\int_0^\infty |\D g(r)|^p r^{(\delta+2)p} r^{d-1}dr \\
 &=\|r^{\delta+2} \D g(r)\|^p_{\Lpd(0,\infty)}. \qedhere
\end{align*}
\end{proof}

To prove the reverse inequality in \eqref{E:radial_inhom_Sob_char_Ostermann}, we proceed similarly, but we have to deal also with the boundary terms.
We rely on the following lemma.
\begin{lemma}\label{lem:h}
Let $R\in(0, \infty)$, $p\in[1, \infty)$, and $\delta>-d/p$. Then there is a constant $C>0$ such that
\begin{equation}\label{lem:h:eq}
    \|r^\delta g(r)\|_{\Lpd(0,R)} \leq C\Big( \|r^{\delta+2} \D g(r)\|_{\Lpd(0,R)} + \|r^{\delta+1} g(r)\|_{\Lpd(0,R)} \Big)
\end{equation}

for every smooth function $g$ on $(0, R]$.
\end{lemma}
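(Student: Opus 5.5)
Assume first $\D g\not\equiv$ anything special; we only need the fundamental theorem of calculus and a Hardy-type inequality, now on a bounded interval with the boundary value at $R$ entering. Write, for $r\in(0,R]$,
\[
g(r)=g(R)-\int_r^R g'(s)\,ds, \qquad\text{so}\qquad |g(r)|\le |g(R)|+\int_r^R |g'(s)|\,ds .
\]
The integral term will be handled exactly as in the preceding lemma: extend $h=|g'|\chi_{(0,R)}$ by zero and apply the Hardy inequality \eqref{eq:Hardy_general} with $\alpha=\delta p+d>0$. This gives
\[
\int_0^R r^{\delta p+d-1}\Big(\int_r^R|g'(s)|\,ds\Big)^p dr\lesssim \int_0^R |g'(r)|^p r^{(\delta+1)p+d-1}\,dr=\|r^{\delta+2}\D g(r)\|^p_{\Lpd(0,R)},
\]
using $g'=r\D g$. (If the right-hand side of \eqref{lem:h:eq} is infinite there is nothing to prove.)

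The remaining term is the boundary contribution $|g(R)|^p\int_0^R r^{\delta p+d-1}dr=c_{R,\delta,d,p}|g(R)|^p$, finite since $\delta p+d>0$. This is the only genuinely new step, and I would control $|g(R)|$ by the two quantities on the right. Fix the outer half-interval $(R/2,R)$. For $r\in(R/2,R)$ write $g(R)=g(r)+\int_r^R g'(s)\,ds$, so that
\[
|g(R)|\le |g(r)|+\int_{R/2}^R|g'(s)|\,ds .
\]
Average this over $r\in(R/2,R)$ (or raise to the $p$-th power and integrate against $r^{(\delta+1)p+d-1}$, which is comparable to a constant on $(R/2,R)$). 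The first piece is bounded by a constant times $\|r^{\delta+1}g(r)\|_{\Lpd(0,R)}$. The second, by H\"older on $(R/2,R)$ where all powers of $r$ are bounded above and below, is bounded by a constant times $\|r^{\delta+2}\D g(r)\|_{\Lpd(0,R)}$. Note that the choice of weight $r^{\delta+1}$ is harmless here since only the region away from the origin is used.

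Combining via the triangle inequality in $L^p$ yields \eqref{lem:h:eq} with $C$ depending on $R,\delta,d,p$. The main point to double-check is the admissibility of Hardy's inequality with $\alpha=\delta p+d$: this is exactly where $\delta>-d/p$ is needed, just as in the previous lemma. Smoothness of $g$ on $(0,R]$ only up to $R$ (not at $0$) is fine, since Hardy's inequality involves integrals from $r$ to $R$ only, and monotone convergence handles possible infinite values.
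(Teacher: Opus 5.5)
Your proof is correct and follows essentially the same route as the paper. You split off the boundary value $g(R)$, control $g-g(R)$ by Hardy's inequality with $\alpha=d+\delta p$, and bound $|g(R)|$ by averaging over $(R/2,R)$ and applying H\"older's inequality. The only difference is cosmetic: the paper realizes the average through the mean value theorem for integrals (choosing $r_0\in[R/2,R]$ with $g(r_0)$ equal to the mean of $g$ there), whereas you average the pointwise inequality directly.
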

\begin{proof}
We clearly have
\begin{equation}\label{eq:h:1}
    \|r^\delta g(r)\|_{\Lpd(0,R)} \leq \big\|r^\delta\big( g(r) - g(R) \big) \big\|_{\Lpd(0,R)} + |g(R)| \cdot \|r^\delta\|_{\Lpd(0,R)}.
\end{equation}
Moreover, we clearly have $\|r^{\delta} \|_{\Lpd(0,R)} < \infty$ thanks to the fact that $\delta>-d/p$. Furthermore, writing
    \begin{equation*}
        |g(r) - g(R)| \leq \int_r^R s |\D g(s)| \,ds
    \end{equation*}
    and using \eqref{eq:Hardy_general} with $h(s)=s |\D g(s)|\chi_{(0,R)}(s)$ and $\alpha = d + \delta p > 0$, we obtain
    \begin{equation}\label{eq:h:2}
        \big\|r^\delta\big( g(r) - g(R) \big) \big\|_{\Lpd(0,R)} \leq \frac{p}{d + \delta p} \|r^{\delta + 2} \D g(r)\|_{\Lpd(0,R)}.
    \end{equation}
    Now, the elementary mean value theorem for integrals yields the existence of $r_0\in[\frac{R}{2},R]$ such that
\begin{equation*}
    g(r_0) = \frac{2}{R} \int_{R/2}^R g(r) \, dr.
\end{equation*}
Using this, note that
\begin{align*}
    |g(R)| &= \Bigg| \int_{r_0}^R r \D g(r) \,dr + \frac{2}{R} \int_{R/2}^R g(r) \, dr \Bigg| \\
    &\leq \int_{R/2}^R r^{\delta + 2} |\D g(r)| r^{-d - \delta} r^{d-1}\,dr + \frac{2}{R} \int_{R/2}^R r^{\delta+1}|g(r)| r^{-d - \delta} r^{d-1}\, dr.
\end{align*}
Furthermore, using the H\"older inequality, we obtain
\begin{equation}\label{eq:h:3}
    |g(R)| \leq \|r^{-d - \delta}\|_{L^{p'}_{d-1}(R/2,R)}\cdot \Big(\|r^{\delta + 2} \D g(r)\|_{\Lpd(0,R)} + \frac{2}{R} \|r^{\delta + 1} g(r)\|_{\Lpd(0,R)} \Big).
\end{equation}
Hence, combining \eqref{eq:h:1}--\eqref{eq:h:3}, we obtain \eqref{lem:h:eq}.
\end{proof}

By Lemma~\ref{lem:4}, to establish the $\lesssim$ inequality in \eqref{E:radial_inhom_Sob_char_Ostermann}, it is enough to show that
\begin{equation}\label{eq:upper1}
\|r^{k-2\ell}\D^{k-\ell}g\|_{\Lpd(0,R)}\lesssim \sum_{m=0}^n \|r^m\D^mg\|_{\Lpd(0,R)}
\end{equation}
holds for every $n\ge 1$, every $k=0,\dots,n$ and every $\ell=0,\dots,\lfloor k/2\rfloor$. Clearly, \eqref{eq:upper1}
is true if $\ell=0$ and $k$ is arbitrary. The remaining cases follow by induction. 
We assume that \eqref{eq:upper1} holds for $k-1$ and all admissible $\ell$'s and for some pair $(k,\ell-1)$. Then
we obtain by Lemma~\ref{lem:h}
\begin{align*}
\|r^{k-2\ell}\D^{k-\ell}g\|_{\Lpd(0,R)}\lesssim
\|r^{k-2(\ell-1)}\D^{k-(\ell-1)}g\|_{\Lpd(0,R)}+
\|r^{k-1-2(\ell-1)}\D^{(k-1)-(\ell-1)}g\|_{\Lpd(0,R)}.
\end{align*}
Finally, both these terms are bounded by the right-hand side of \eqref{eq:upper1},
as we assume that \eqref{eq:upper1} is true for both the pairs $(k,\ell-1)$ and $(k-1,\ell-1)$.
We conclude that \eqref{eq:upper1} holds also for the pair $(k,\ell).$


\bigskip

The pointwise nature of Theorem~\ref{thm:main'} enables us to obtain a certain variant of Theorem~\ref{thm:radial_Sob_char}
also for $p=\infty$. For that sake, we denote by $\dot{W}^{n,\infty}_{*}(\rd)$ the completion of all smooth compactly supported functions on $\rd$ with
respect to the norm $\|f\|_{\dot{W}_*^{n,\infty}(\rd)} = \|\nabla^n f\|_{L^{\infty}(\rd)}$. 
Again, we conclude that a radial function $f(x)=g(r(x))$
belongs to $\dot{W}_*^{n,\infty}(\rd)$ if, and only if, its radial profile $g$ belongs to the closure of the space of smooth compactly supported even
functions on $\R$ with respect to the norm $\sum_{j=0}^{\lfloor n/2\rfloor} \|(\D^j g)^{(n-2j)}\|_{L^\infty(\R)}$. 
In an analogous way, one can handle also the inhomogeneous spaces $W_*^{n,\infty}(B_R)$.

\section{Further remarks and open problems}\label{sec:4}

We add several comments and remarks to our main results and sketch possible extensions and open problems.

\subsection{Radial symmetry of \texorpdfstring{$S_n f$}{S\_n f}}

It is not immediately clear from the definition of $S_nf$, cf.~\eqref{eq:defS_n},
that $S_n f$ is radially symmetric if $f$ is a radially symmetric function.
In our approach, this comes as a byproduct of Theorem~\ref{thm:main'}.
On the other hand, it can also be shown directly, using only elementary facts about the Frobenius norm and tensors.
We sketch the details of a simple self-containing argument, which avoids extensive references to the literature on tensors.

\medskip

We denote by
\[
A=(A_{i_1,\dots,i_n})_{i_1,\dots,i_n=1}^d
\]
a tensor of $n$-th order and by
\[
A(u_1,\dots,u_n)=\sum_{i_1,\dots,i_n=1}^d A_{i_1,\dots,i_n}u_{1,i_1}u_{2,i_2}\dots u_{n,i_n}
\]
its action on an $n$-tuple of vectors $(u_1,\dots,u_n)$. The Frobenius norm of $A$ is defined as
\[
\|A\|_F^2=\sum_{i_1,\dots,i_n=1}^d [A_{i_1,\dots,i_n}]^2=\sum_{i_1,\dots,i_n=1}^d [A(\cVec{i_1},\dots,\cVec{i_n})]^2,
\]
where $\cVec{1},\dots,\cVec{d}$ are the canonical basis vectors of $\rd$.

\medskip

We shall rely on the following two facts, which are easily verified by a direct calculation.
If $n=2$, then the first one reflects the very well-known fact that the Frobenius norm of a matrix
is unitarily invariant, see \cite[Chapter 5.6]{HJ}. The second one follows just by iterated chain rule.
\begin{lemma}\label{lem:sym}
(i) Let $\varphi_1,\dots,\varphi_d$ be any orthonormal basis of $\R^d$. Then
\[
\|A\|_F^2=\sum_{i_1,\dots,i_n=1}^d [A(\varphi_{i_1},\dots,\varphi_{i_n})]^2.
\]
(ii) Let $f$ be a smooth function of $d$ variables, let $U\in\R^{d\times d}$ be an orthonormal matrix and let $g(x)=f(Ux)$. Then
\[
\nabla^ng(x)(\cVec{i_1},\dots,\cVec{i_n})=\nabla^nf(Ux)(u_{i_1},\dots,u_{i_n}),
\]
where $u_1,\dots,u_d$ are the columns of $U$.
\end{lemma}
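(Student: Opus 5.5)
Both parts reduce to expanding multilinear expressions and using orthogonality of the relevant matrices.

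For (i), let $\Phi$ be the orthogonal matrix whose columns are $\varphi_1,\dots,\varphi_d$, so $\varphi_{i,k}=\Phi_{k,i}$. By multilinearity,
\[
A(\varphi_{i_1},\dots,\varphi_{i_n})=\sum_{k_1,\dots,k_n=1}^d A_{k_1,\dots,k_n}\Phi_{k_1,i_1}\cdots\Phi_{k_n,i_n}.
\]
Squaring gives a double sum over $(k_1,\dots,k_n)$ and $(l_1,\dots,l_n)$. Summing over $i_1,\dots,i_n$, the sum factorizes into $\prod_{m=1}^n\sum_{i_m}\Phi_{k_m,i_m}\Phi_{l_m,i_m}$. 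Since $\Phi\Phi^T=I$, each factor equals $\delta_{k_m l_m}$, and only the diagonal terms $\sum_{k}A_{k_1,\dots,k_n}^2=\|A\|_F^2$ survive.

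An alternative I could use is induction on $n$, applying the matrix fact to one index at a time. The direct computation above is cleaner, so I will use that.

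For (ii), I will prove by induction on $n$ that
\[
\frac{\partial^n g(x)}{\partial x_{i_1}\cdots\partial x_{i_n}}=\nabla^n f(Ux)(u_{i_1},\dots,u_{i_n}).
\]

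\begin{itemize}
\item \emph{Base case $n=1$.} The chain rule gives $\partial_{x_i} g(x)=\sum_k \partial_k f(Ux)\,U_{k,i}=\nabla f(Ux)(u_i)$, since $U_{k,i}$ is the $k$-th entry of the column $u_i$.
\item \emph{Inductive step.} Write $\nabla^n f(Ux)(u_{i_1},\dots,u_{i_n})=\sum_{k}\partial_{k_1}\cdots\partial_{k_n}f(Ux)\,U_{k_1,i_1}\cdots U_{k_n,i_n}$. This is a linear combination, with constant coefficients, of the functions $x\mapsto \partial^\beta f(Ux)$.
\item Differentiate it in $x_{i_{n+1}}$ using the base case applied to each $\partial_{k_1}\cdots\partial_{k_n}f$. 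This produces one more factor $U_{k_{n+1},i_{n+1}}$ and one more derivative $\partial_{k_{n+1}}$, which is exactly $\nabla^{n+1}f(Ux)(u_{i_1},\dots,u_{i_{n+1}})$.
\end{itemize}

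Orthonormality of $U$ is not needed for (ii); it matters only when (ii) is combined with (i). Taking $\varphi_i=u_i$ there gives $\|\nabla^n g(x)\|_F=\|\nabla^n f(Ux)\|_F$. Neither step is a real obstacle; the only care needed is in tracking indices in the induction.
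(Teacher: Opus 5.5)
Your proposal is correct and follows essentially the paper's approach: the paper only remarks that (i) is a direct calculation (the $n$-index analogue of unitary invariance of the matrix Frobenius norm) and that (ii) follows by iterating the chain rule, and your expansion using $\Phi\Phi^T=I$ and your induction on $n$ carry out exactly these two steps. You are also right that orthonormality of $U$ is not needed for (ii) itself; it only enters when (ii) is combined with (i).
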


Let now $f$ be radially symmetric and let $U$ be an orthonormal matrix. Then $f(x)=f(Ux)$ and Lemma~\ref{lem:sym} gives 
\[
\|\nabla^n f(x)\|_F^2 = \sum_{i_1,\dots,i_n=1}^d [\nabla^nf(x)(\cVec{i_1},\dots,\cVec{i_n})]^2= \sum_{i_1,\dots,i_n=1}^d [\nabla^nf(Ux)(u_{i_1},\dots,u_{i_n})]^2=\|\nabla^n f(Ux)\|_F^2.
\]
Therefore, $S_nf(x)=S_nf(Ux)$ and $S_nf(x)$ is also radially symmetric.

\subsection{Explicit formula for \texorpdfstring{$T_n$}{T\_n}}\label{sec:worked_out_formula} 
There are several ways how to rewrite the operator $T_n$ defined in \eqref{eq:defT'}.
Especially, one might prefer to avoid the mixture of the (usual) derivatives and the higher iterations of the operator $\D$.
We will show that without much additional effort, one can work with only the iterations of $\D$,
or with higher derivatives of $g$, respectively.

\medskip

Indeed, first we can apply Lemma~\ref{lem:4}
to reformulate $(\D^j g)^{(n-2j)}$, which transforms \eqref{eq:defT'} into
\begin{equation}\label{E:more_expl:2}
    (T_ng)(r) = \sum_{j=0}^{\lfloor n/2\rfloor} \alpha^d_{n,j} \left[\sum_{k = 0}^{\lfloor (n-2j)/2 \rfloor} \beta_{n-2j,k} r^{n-2j- 2k} \D^{n - j - k}g(r)\right]^2.
\end{equation}
Furthermore, since iterating the operator $\D$ essentially means to apply the product rule and it is easy to differentiate power functions, it is not hard to work out that, for $m\in\N$,
\begin{equation*}
    \D^{m}g(r)=\sum_{l=0}^{m - 1}c_{m+l-1,l} \frac{g^{(m - l)}(r)}{r^{m + l}},
\end{equation*}
where the coefficients $c_{m+l-1,l}$ are those from Lemma~\ref{lem:RnDn_linear_combination}. Finally, plugging it into \eqref{E:more_expl:2}, one obtains
\begin{equation*}
    (T_ng)(r) = \sum_{j=0}^{\lfloor n/2\rfloor} \alpha^d_{n,j} \left[\sum_{k = 0}^{\lfloor (n-2j)/2 \rfloor} \beta_{n-2j,k} \sum_{l=0}^{n - j - k - 1} c_{n-j-k+l-1,l} \frac{g^{(n - j - k - l)}(r)}{r^{j+k+l}} \right]^2.
\end{equation*}

\subsection{Modified radial profile \texorpdfstring{$f(x) = h(r(x)^2)$}{f(x) = h(r(x)\textasciicircum{}2)}}
Instead of the radial profile $f(x) = g(r(x))$, it is sometimes more convenient to work with the modified
radial profile $f(x) = h(r(x)^2)$, where $h(r^2) = g(r)$. One can verify that, for $j,m\in\N_0$,
\begin{equation*}
    (\D^j g)^{(m)}(r) = 2^{m+j} (\widetilde{D}^{m}h^{(j)})(r^2),
\end{equation*}
where $\widetilde{D}h(r) = \sqrt{r} \cdot h'(r)$.
Plugging this into \eqref{eq:defT'}, we obtain 
\begin{align*}
S_n f(x)= (T_ng)(r)=
\sum_{j=0}^{\lfloor n/2\rfloor} \alpha^d_{n,j} \left[(\D^j g)^{(n-2j)}(r)\right]^2
= \sum_{j=0}^{\lfloor n/2\rfloor} \alpha^d_{n,j} \left[ 2^{n-j} \big( \widetilde{D}^{n-2j}h^{(j)} \big)(r^2) \right]^2
=:(\widetilde{T}_n h)(r^2).
\end{align*}

\subsection{Subspace of radial functions in general Sobolev-type spaces}\label{sec:subspace_generalization}

In Section~\ref{sec:subspace}, we showed how one can use Theorem~\ref{thm:main'} to obtain a characterization of the subspace of radial functions in the usual Sobolev spaces $W^{n,p}(B_R)$ or $\dot{W}^{n,p}(\rd)$ (recall Theorem~\ref{thm:radial_Sob_char}). Importantly, since the formula \eqref{eq:defT'} is pointwise, it is also easy to use Theorem~\ref{thm:main'} to characterize the subspace in more general Sobolev-type spaces. Note that this is in contrast with the characterizations and approaches, for example, from \cites{GdF-dS-M, Ostermann}, which used various Hardy-type inequalities.
In fact, we may replace Lebesgue spaces $L^p$ in the definition of the Sobolev space with quite general Banach lattices of functions. We present it here only for inhomogeneous Sobolev spaces over $\rd$ and leave the remaining cases, which require only simple technical modifications, to the interested reader.

Let $X$ be a Banach lattice of functions on $\rd$, that is, a Banach space of (equivalence classes of) measurable functions on $\rd$ whose norm satisfies the lattice property:
\begin{equation*}
    \text{if $f\in X$ and $|g|\leq |f|$ a.e., then $g\in X$ and $\|g\|_X \leq \|f\|_X$}.
\end{equation*}
Assume that $X$ contains bounded functions supported on compact sets and is continuously embedded in the space of locally integrable functions, that is,
\begin{equation*}
    L^\infty_c(\rd) \subseteq X \hookrightarrow L^1_{loc}(\rd).
\end{equation*}
Recall that $X \hookrightarrow L^1_{loc}(\rd)$ means that for every compact set $K\subseteq\rd$, there is a constant $C_K$ such that $\|f\|_{L^1(K)}\leq C_K \|f\|_X$ for every $f\in X$. As an important example, $X$ can be any Banach function space (see~\cite{BS}), which includes Orlicz spaces, Lorentz spaces, variable exponent Lebesgue spaces, or some weighted Lebesgue spaces. We then define the $n$th order inhomogeneous Sobolev space built upon $X$, and denote it $W_0^n X(\rd)$, as the completion of smooth compactly supported functions on $\rd$ with respect to the norm
\begin{equation*}
    \|f\|_{W_0^n X(\rd)} = \sum_{k = 0}^n \|\nabla^k f\|_{X}.
\end{equation*}
As before, for brevity we write $\|\nabla^k f\|_{X}$ instead of $\| \|\nabla^k f\|_F\|_{X}$.

Now, using Theorem~\ref{thm:main'} and the lattice property of $X$, we straightforwardly obtain
\begin{equation}\label{E:subspace_generalization:1}
    \|f\|_{W_0^n X(\rd)} \approx \sum_{k=0}^n \sum_{j=0}^{\lfloor k/2\rfloor} \|(\D^j g)^{(k-2j)}(r(x))\|_{X}
\end{equation}
for every smooth radial function $f(x) = g(r(x))$ with compact support.

Moreover, assume that $X$ is representable over $(0, \infty)$ in the sense that there is a Banach lattice of functions on $(0,\infty)$, denoted $\widebar{X}$, such that
\begin{equation}\label{E:subspace_generalization:5}
    \|h(r(x))\|_{X} \approx \|h(r^\frac1{d})\|_{\widebar{X}} \quad \text{for every measurable $h$ on $(0, \infty)$}.
\end{equation}
For example, this is the case for all rearrangement-invariant function spaces in the sense of \cite{BS} (the interested reader is referred to \cite{CPS} and references therein for more information about Sobolev spaces built upon these spaces).
Then, using \eqref{E:subspace_generalization:1}, \eqref{E:subspace_generalization:5}, and a density argument, we have the following characterization in the spirit of Theorem~\ref{thm:radial_Sob_char}. A radial function $f(x) = g(r(x))$ belongs to $W_0^n X(\rd)$ if and only if its radial profile $g$ belongs to the closure of compactly supported $\Cinf_{even}(\R)$-functions with respect to the norm
\begin{equation*}
    \sum_{k=0}^n \sum_{j=0}^{\lfloor k/2\rfloor} \|(\D^j g)^{(k-2j)}(r^\frac1{d})\|_{\widebar{X}}.
\end{equation*}
Note that \eqref{E:layer_cake}, after the obvious change of variables, is a special case of \eqref{E:subspace_generalization:5}. However, for technical reasons (cf.~\cite[pp.~31-32]{Baernstein}), it is better to write \eqref{E:subspace_generalization:5} in the form with $r^\frac1{d}$ instead of $r$ in this general setting. Finally, we note that it would be possible to generalize this even to the quasi-Banach setting, but we do not dive into it here.

\subsection{Open problems}

There are several questions closely related to Theorem~\ref{thm:main'}, which were not addressed in our work and which we believe to be worth further investigation.
We state them here in the hope that they might attract the attention of other researchers.
\begin{enumerate}
    \item As mentioned in the introduction, the elegant (and very short) paper \cite{Lyons-Zumbrun} provided
    the striking formula \eqref{E:Lyons-Zumbrun} for an arbitrary partial derivative of a radial function.
    This relation was recently used in \cite{Ostermann} to characterize the subspaces of Sobolev spaces consisting of radial functions.
    We leave it as an open problem whether one could also exploit \cite{Lyons-Zumbrun} to provide an alternative proof of Theorem~\ref{thm:main'}.
    \item As mentioned already in the Introduction, for $n=2$, \eqref{eq:defT'} reduces to the well-known relation \eqref{eq:Sn2}, which has a straightforward connection to the spectrum
    of the Hessian matrix $Hf(x)$. It would be interesting to know if there is an analogy of this connection also for higher $n\ge 3$.
    This could lead to better understanding of the role of the coefficients $\alpha^d_{n,j}$ in \eqref{eq:defT'}.
    Due to the well-known difficulties of spectral theory for tensors \cite{Tensors}, we do not dive into this question in this work.
    \item Our approach is tailored to radially symmetric functions defined on Euclidean spaces. It would be of course interesting to know
    if it can also be adapted to other types of symmetries. Furthermore, the role of symmetry is also classically studied in non-Euclidean geometries (for example, see~\cite{Hebey, MR4277332}). An important example is the hyperbolic geometry. For studying radial symmetry in the hyperbolic space, the Poincar\'e ball model, that is, the unit ball in $\rd$ equipped with a suitable Riemannian metric, is particularly useful because its conformal factor is radially symmetric. Recently, a characterization of the subspace of radial functions in Sobolev spaces on hyperbolic balls with finite radius in the Poincar\'e ball model was provided in \cite{DLP:25}. However, this characterization is in the spirit of its Euclidean counterpart from \cite{GdF-dS-M}, and as such entails the same restriction on the parameters involved. It would be of interest to find an elegant characterization without such restriction.
\end{enumerate}

\bigskip

{\bf Acknowledgment:} The work of both authors has been supported by the grant 23-04720S of the Czech Science Foundation.


\begin{biblist}

\bib{AF}{book}{
   author={Adams, Robert A.},
   author={Fournier, John J. F.},
   title={Sobolev spaces},
   series={Pure and Applied Mathematics (Amsterdam)},
   volume={140},
   edition={2},
   publisher={Elsevier/Academic Press, Amsterdam},
   date={2003},
   pages={xiv+305},
   isbn={0-12-044143-8},
   review={\MR{2424078}},
}

\bib{AP}{article}{
   author={Angelsberg, Gilles},
   author={Pumberger, David},
   title={A regularity result for polyharmonic maps with higher
   integrability},
   journal={Ann. Global Anal. Geom.},
   volume={35},
   date={2009},
   number={1},
   pages={63--81},
   issn={0232-704X},
   review={\MR{2480664}},
   doi={10.1007/s10455-008-9122-z},
}

\bib{Baernstein}{book}{
   author={Baernstein, Albert, II},
   title={Symmetrization in analysis},
   series={New Mathematical Monographs},
   volume={36},
   note={With David Drasin and Richard S. Laugesen;
   With a foreword by Walter Hayman},
   publisher={Cambridge University Press, Cambridge},
   date={2019},
   pages={xviii+473},
   isbn={978-0-521-83047-8},
   review={\MR{3929712}},
   doi={10.1017/9781139020244},
}

\bib{BS}{book}{
   author={Bennett, Colin},
   author={Sharpley, Robert},
   title={Interpolation of operators},
   series={Pure and Applied Mathematics},
   volume={129},
   publisher={Academic Press, Inc., Boston, MA},
   date={1988},
   pages={xiv+469},
   isbn={0-12-088730-4},
   review={\MR{0928802}},
}

\bib{B-L1}{article}{
   author={Berestycki, H.},
   author={Lions, P.-L.},
   title={Existence of a ground state in nonlinear equations of the
   Klein-Gordon type},
   conference={
      title={Variational inequalities and complementarity problems},
      address={Proc. Internat. School, Erice},
      date={1978},
   },
   book={
      series={Wiley-Intersci. Publ.},
      publisher={Wiley, Chichester},
   },
   isbn={0-471-27610-3},
   date={1980},
   pages={35--51},
   review={\MR{0578738}},
}

\bib{B-L2}{article}{
   author={Berestycki, H.},
   author={Lions, P.-L.},
   title={Nonlinear scalar field equations. I. Existence of a ground state},
   journal={Arch. Rational Mech. Anal.},
   volume={82},
   date={1983},
   number={4},
   pages={313--345},
   issn={0003-9527},
   review={\MR{0695535}},
   doi={10.1007/BF00250555},
}

\bib{B-L3}{article}{
   author={Berestycki, H.},
   author={Lions, P.-L.},
   title={Nonlinear scalar field equations. II. Existence of infinitely many
   solutions},
   journal={Arch. Rational Mech. Anal.},
   volume={82},
   date={1983},
   number={4},
   pages={347--375},
   issn={0003-9527},
   review={\MR{0695536}},
   doi={10.1007/BF00250556},
}

\bib{CPS}{article}{
   author={Cianchi, Andrea},
   author={Pick, Lubo\v s},
   author={Slav\'ikov\'a, Lenka},
   title={Higher-order Sobolev embeddings and isoperimetric inequalities},
   journal={Adv. Math.},
   volume={273},
   date={2015},
   pages={568--650},
   issn={0001-8708},
   review={\MR{3311772}},
   doi={10.1016/j.aim.2014.12.027},
}

\bib{C-G-M}{article}{
   author={Coleman, S.},
   author={Glaser, V.},
   author={Martin, A.},
   title={Action minima among solutions to a class of Euclidean scalar field
   equations},
   journal={Comm. Math. Phys.},
   volume={58},
   date={1978},
   number={2},
   pages={211--221},
   issn={0010-3616},
   review={\MR{0468913}},
}

\bib{DLP:25}{article}{
   author={Do \'O, Jo\~ao Marcos},
   author={Lu, Guozhen},
   author={Ponciano, Raon\'i},
   title={Sharp Sobolev and Adams-Trudinger-Moser inequalities for symmetric
   functions without boundary conditions on hyperbolic spaces},
   journal={Calc. Var. Partial Differential Equations},
   volume={64},
   date={2025},
   number={8},
   pages={Paper No. 255, 42},
   issn={0944-2669},
   review={\MR{4959099}},
   doi={10.1007/s00526-025-03131-1},
}

\bib{Duchon}{article}{
   author={Duchon, Jean},
   title={Splines minimizing rotation-invariant semi-norms in Sobolev
   spaces},
   conference={
      title={Constructive theory of functions of several variables},
      address={Proc. Conf., Math. Res. Inst., Oberwolfach},
      date={1976},
   },
   book={
      series={Lecture Notes in Math.},
      volume={Vol. 571},
      publisher={Springer, Berlin-New York},
   },
   date={1977},
   pages={85--100},
   review={\MR{0493110}},
}

\bib{MR4277332}{article}{
   author={Farkas, Csaba},
   author={Krist\'aly, Alexandru},
   author={Mester, \'Agnes},
   title={Compact Sobolev embeddings on non-compact manifolds via orbit
   expansions of isometry groups},
   journal={Calc. Var. Partial Differential Equations},
   volume={60},
   date={2021},
   number={4},
   pages={Paper No. 128, 31},
   issn={0944-2669},
   review={\MR{4277332}},
   doi={10.1007/s00526-021-01997-5},
}

\bib{GS}{article}{
   author={Gastel, Andreas},
   author={Scheven, Christoph},
   title={Regularity of polyharmonic maps in the critical dimension},
   journal={Comm. Anal. Geom.},
   volume={17},
   date={2009},
   number={2},
   pages={185--226},
   issn={1019-8385},
   review={\MR{2520907}},
   doi={10.4310/CAG.2009.v17.n2.a2},
}

\bib{GdF-dS-M}{article}{
   author={Guedes de Figueiredo, Djairo},
   author={dos Santos, Ederson Moreira},
   author={Miyagaki, Ol\'impio Hiroshi},
   title={Sobolev spaces of symmetric functions and applications},
   journal={J. Funct. Anal.},
   volume={261},
   date={2011},
   number={12},
   pages={3735--3770},
   issn={0022-1236},
   review={\MR{2838041}},
   doi={10.1016/j.jfa.2011.08.016},
}

\bib{Hebey}{book}{
   author={Hebey, Emmanuel},
   title={Nonlinear analysis on manifolds: Sobolev spaces and inequalities},
   series={Courant Lecture Notes in Mathematics},
   volume={5},
   publisher={New York University, Courant Institute of Mathematical
   Sciences, New York; American Mathematical Society, Providence, RI},
   date={1999},
   pages={x+309},
   isbn={0-9658703-4-0},
   isbn={0-8218-2700-6},
   review={\MR{1688256}},
}

\bib{Tensors}{article}{
   author={Hillar, Christopher J.},
   author={Lim, Lek-Heng},
   title={Most tensor problems are NP-hard},
   journal={J. ACM},
   volume={60},
   date={2013},
   number={6},
   pages={Art. 45, 39},
   issn={0004-5411},
   review={\MR{3144915}},
   doi={10.1145/2512329},
}

\bib{HJ}{book}{
   author={Horn, Roger A.},
   author={Johnson, Charles R.},
   title={Matrix analysis},
   publisher={Cambridge University Press, Cambridge},
   date={1985},
   pages={xiii+561},
   isbn={0-521-30586-1},
   review={\MR{0832183}},
   doi={10.1017/CBO9780511810817},
}

\bib{Johnson:02}{article}{
   author={Johnson, Warren P.},
   title={The curious history of Fa\`a{} di Bruno's formula},
   journal={Amer. Math. Monthly},
   volume={109},
   date={2002},
   number={3},
   pages={217--234},
   issn={0002-9890},
   review={\MR{1903577}},
   doi={10.2307/2695352},
}

\bib{LL}{book}{
   author={Lieb, Elliott H.},
   author={Loss, Michael},
   title={Analysis},
   series={Graduate Studies in Mathematics},
   volume={14},
   edition={2},
   publisher={American Mathematical Society, Providence, RI},
   date={2001},
   pages={xxii+346},
   isbn={0-8218-2783-9},
   review={\MR{1817225}},
   doi={10.1090/gsm/014},
}

\bib{Lions-com}{article}{
   author={Lions, P.-L.},
   title={Sym\'etrie et compacit\'e{} dans les espaces de Sobolev},
   language={French, with English summary},
   journal={J. Functional Analysis},
   volume={49},
   date={1982},
   number={3},
   pages={315--334},
   issn={0022-1236},
   review={\MR{0683027}},
   doi={10.1016/0022-1236(82)90072-6},
}

\bib{Lions-vanA}{article}{
   author={Lions, P.-L.},
   title={The concentration-compactness principle in the calculus of
   variations. The locally compact case. I},
   language={English, with French summary},
   journal={Ann. Inst. H. Poincar\'e{} Anal. Non Lin\'eaire},
   volume={1},
   date={1984},
   number={2},
   pages={109--145},
   issn={0294-1449},
   review={\MR{0778970}},
}

\bib{Lions-vanB}{article}{
   author={Lions, P.-L.},
   title={The concentration-compactness principle in the calculus of
   variations. The locally compact case. II},
   language={English, with French summary},
   journal={Ann. Inst. H. Poincar\'e{} Anal. Non Lin\'eaire},
   volume={1},
   date={1984},
   number={4},
   pages={223--283},
   issn={0294-1449},
   review={\MR{0778974}},
}

\bib{Lions-conA}{article}{
   author={Lions, P.-L.},
   title={The concentration-compactness principle in the calculus of
   variations. The limit case. I},
   journal={Rev. Mat. Iberoamericana},
   volume={1},
   date={1985},
   number={1},
   pages={145--201},
   issn={0213-2230},
   review={\MR{0834360}},
   doi={10.4171/RMI/6},
}

\bib{Lions-conB}{article}{
   author={Lions, P.-L.},
   title={The concentration-compactness principle in the calculus of
   variations. The limit case. II},
   journal={Rev. Mat. Iberoamericana},
   volume={1},
   date={1985},
   number={2},
   pages={45--121},
   issn={0213-2230},
   review={\MR{0850686}},
   doi={10.4171/RMI/12},
}

\bib{Lyons-Zumbrun}{article}{
   author={Lyons, Russell},
   author={Zumbrun, Kevin},
   title={Homogeneous partial derivatives of radial functions},
   journal={Proc. Amer. Math. Soc.},
   volume={121},
   date={1994},
   number={1},
   pages={315--316},
   issn={0002-9939},
   review={\MR{1227524}},
   doi={10.2307/2160399},
}

\bib{Ostermann}{article}{
   author={Ostermann, Matthias},
   title={A characterization of the subspace of radially symmetric functions in Sobolev spaces},
   journal={Commun. Contemp. Math.},
   volume={27},
   date={2025},
   number={3},
   pages={Paper No. 2450018, 15},
   issn={0219-1997},
   review={\MR{4844376}},
   doi={10.1142/S0219199724500184},
}

\bib{SSV1}{article}{
   author={Sickel, Winfried},
   author={Skrzypczak, Leszek},
   author={Vybiral, Jan},
   title={On the interplay of regularity and decay in case of radial
   functions I. Inhomogeneous spaces},
   journal={Commun. Contemp. Math.},
   volume={14},
   date={2012},
   number={1},
   pages={1250005, 60},
   issn={0219-1997},
   review={\MR{2902295}},
   doi={10.1142/S0219199712500058},
}

\bib{MR3330617}{article}{
   author={Sickel, Winfried},
   author={Skrzypczak, Leszek},
   author={Vyb\'iral, Jan},
   title={The characterization of radial subspaces of Besov and
   Lizorkin-Triebel spaces by differences},
   conference={
      title={Function spaces X},
   },
   book={
      series={Banach Center Publ.},
      volume={102},
      publisher={Polish Acad. Sci. Inst. Math., Warsaw},
   },
   isbn={978-83-86806-25-6},
   date={2014},
   pages={197--214},
   review={\MR{3330617}},
   doi={10.4064/bc102-0-14},
}

\bib{Win1}{article}{
   author={Sickel, Winfried},
   author={Skrzypczak, Leszek},
   title={On the interplay of regularity and decay in case of radial
   functions II. Homogeneous spaces},
   journal={J. Fourier Anal. Appl.},
   volume={18},
   date={2012},
   number={3},
   pages={548--582},
   issn={1069-5869},
   review={\MR{2921084}},
   doi={10.1007/s00041-011-9205-2},
}

\bib{Win2}{article}{
   author={Sickel, Winfried},
   author={Skrzypczak, Leszek},
   title={Radial subspaces of Besov and Lizorkin-Triebel classes: extended
   Strauss lemma and compactness of embeddings},
   journal={J. Fourier Anal. Appl.},
   volume={6},
   date={2000},
   number={6},
   pages={639--662},
   issn={1069-5869},
   review={\MR{1790248}},
   doi={10.1007/BF02510700},
}

\bib{Stein}{book}{
   author={Stein, Elias M.},
   title={Singular integrals and differentiability properties of functions},
   series={Princeton Mathematical Series},
   volume={No. 30},
   publisher={Princeton University Press, Princeton, NJ},
   date={1970},
   pages={xiv+290},
   review={\MR{0290095}},
}

\bib{MR0454365}{article}{
   author={Strauss, Walter A.},
   title={Existence of solitary waves in higher dimensions},
   journal={Comm. Math. Phys.},
   volume={55},
   date={1977},
   number={2},
   pages={149--162},
   issn={0010-3616},
   review={\MR{0454365}},
}

\bib{Talenti}{article}{
   author={Talenti, Giorgio},
   title={Best constant in Sobolev inequality},
   journal={Ann. Mat. Pura Appl. (4)},
   volume={110},
   date={1976},
   pages={353--372},
   issn={0003-4622},
   review={\MR{0463908}},
   doi={10.1007/BF02418013},
}

\bib{W}{book}{
   author={Wahba, Grace},
   title={Spline models for observational data},
   series={CBMS-NSF Regional Conference Series in Applied Mathematics},
   volume={59},
   publisher={Society for Industrial and Applied Mathematics (SIAM),
   Philadelphia, PA},
   date={1990},
   pages={xii+169},
   isbn={0-89871-244-0},
   review={\MR{1045442}},
   doi={10.1137/1.9781611970128},
}

\bib{Whitney:43}{article}{
   author={Whitney, Hassler},
   title={Differentiable even functions},
   journal={Duke Math. J.},
   volume={10},
   date={1943},
   pages={159--160},
   issn={0012-7094},
   review={\MR{0007783}},
}

\end{biblist}
\end{document}

The reversed inequality in \eqref{E:radial_hom_Sob_char_Ostermann} easily follows from Theorem~\ref{thm:radial_Sob_char} and the repeated use of the following lemma with $\delta = 0$.
\begin{lemma}\label{lemma:upper_bound_on_sum_of_Ds_homog}
Let $p\in[1, \infty)$, $\delta>-d/p$, $n\in\N_0$, and $j=0,\dots, \lfloor n/2 \rfloor$. Then there is a constant $C>0$ such that
    \begin{equation}\label{E:upper_bound_on_sum_of_Ds_homog}
          \|r^\delta (\D^{j}g)^{(n - 2j)}(r) \|_{\Lpd(0,\infty)} \leq C \|r^{n+\delta} \D^n g(r)\|_{\Lpd(0,\infty)}
    \end{equation}
    for every smooth compactly supported function $g$ in $(0, \infty)$.
\end{lemma}
\begin{proof}
We establish \eqref{E:upper_bound_on_sum_of_Ds_homog} by double induction on $n\in\N_0$ and $j=0,\dots,\lfloor n/2 \rfloor$. To this end, note that it is sufficient to prove the following three facts. First, \eqref{E:upper_bound_on_sum_of_Ds_homog} holds for $n=0,1$ and $j=0$. Second, if \eqref{E:upper_bound_on_sum_of_Ds_homog} holds for $n\in\N_0$ and every $j=0, \dots, \lfloor n/2 \rfloor$, then it holds for $n+2$ and $j=0$. Third, if $n\geq2$, $0\leq j_0 \leq \lfloor n/2 \rfloor - 1$ and \eqref{E:upper_bound_on_sum_of_Ds_homog} is true for $n-1$ and every $j=0, \dots, \lfloor (n-1)/2 \rfloor$, then it is also true for the given $n$ and $j_0+1$. Since the first fact is obviously true, we only need to prove the remaining two.

We now establish the second fact. Assume that \eqref{E:upper_bound_on_sum_of_Ds_homog} is true for $n\in\N_0$ and every $j=0,\dots, \lfloor n/2 \rfloor$. Using Lemma~\ref{lem:RnDn_linear_combination}, we obtain
\begin{align}
    \|r^\delta g^{(n + 2)}(r) \|_{\Lpd(0,\infty)} &\leq \|r^{n+2 + \delta} \D^{n+2}g(r) \|_{\Lpd(0,\infty)} \nonumber\\
    &\quad+ \Big( \max_{j=0,\dots,\lfloor (n+2)/2 \rfloor} |c_{n+2,j}| \Big) \cdot \sum_{j = 1}^{\lfloor (n+2)/2 \rfloor}\|r^\delta (\D^{j}g)^{(n + 2 - 2j)}(r) \|_{\Lpd(0,\infty)}. \label{eq:upper_bound_on_sum_of_Ds_homog:1}
\end{align}
Now, since
\begin{equation*}
    \sum_{j = 1}^{\lfloor (n+2)/2 \rfloor} \|r^\delta (\D^{j}g)^{(n + 2 - 2j)}(r) \|_{\Lpd(0,\infty)} = \sum_{j = 0}^{\lfloor n/2 \rfloor} \|r^\delta (\D^{j}(\D g))^{(n - 2j)}(r) \|_{\Lpd(0,\infty)},
\end{equation*}
we can use our induction hypothesis to $\D g$ instead of $g$ to obtain
\begin{equation}\label{eq:upper_bound_on_sum_of_Ds_homog:2}
    \sum_{j = 1}^{\lfloor (n+2)/2 \rfloor} \|r^\delta (\D^{j}g)^{(n + 2 - 2j)}(r) \|_{\Lpd(0,\infty)} \lesssim \|r^{n+\delta} \D^{n+1} g(r)\|_{\Lpd(0,\infty)}
\end{equation}
with a multiplicative constant independent of $g$. Recall that a classical one-dimensional Hardy's inequality (cf.~\cite[Chapter~3, Lemma~3.9]{BS}) tells us that
\begin{equation}\label{eq:Hardy_general}
\int_0^\infty \Big(\int_r^\infty h(s) ds\Big)^p r^{\alpha - 1} \,dr \leq \Big(\frac{p}{\alpha}\Big)^p \int_0^\infty h(r)^p r^{p + \alpha - 1} \,dr,
\end{equation}
for all nonnegative measurable functions $h$ on $(0, \infty)$, $\alpha>0$, and $p\in[1,\infty)$. Writing
\begin{equation*}
    |\D^{n+1} g(r)| = \Big| \int_r^\infty (\D^{n+1} g)'(s)  \,ds \Big| = \Big| \int_r^\infty s (\D^{n+2} g)(s) \,ds \Big| \leq \int_r^\infty s |(\D^{n+2} g)(s)| \,ds
\end{equation*}
and using \eqref{eq:Hardy_general} with $h(s)=s |\D^{n+2} g(s)|$ and $\alpha = d + (n+\delta)p > 0$, 
we obtain
\begin{align}
    \|r^{n+\delta} \D^{n+1} g(r)\|_{\Lpd(0,\infty)}^p &\leq \int_0^\infty r^{(n+\delta)p} \Bigg( \int_r^\infty s |(\D^{n+2} g)(s)|  \,ds \Bigg) ^p r^{d-1} \, dr \nonumber\\
    &\leq \Big( \frac{p}{d + (n+\delta)p} \Big)^p \|r^{n + 2 + \delta} \D^{n+2} g(r)\|_{\Lpd(0,\infty)}^p. \label{eq:upper_bound_on_sum_of_Ds_homog:3}
\end{align}
Hence,
combining \eqref{eq:upper_bound_on_sum_of_Ds_homog:1}, \eqref{eq:upper_bound_on_sum_of_Ds_homog:2}, and \eqref{eq:upper_bound_on_sum_of_Ds_homog:3}, we see that \eqref{E:upper_bound_on_sum_of_Ds_homog} is true for $n+2$ and $j=0$.

Finally, we prove the third fact. Let $n\geq2$ and $0\leq j_0 \leq \lfloor n/2 \rfloor - 1$ and assume that \eqref{E:upper_bound_on_sum_of_Ds_homog} is true for $n-1$ and every $j=0, \dots, \lfloor (n-1)/2 \rfloor$. As before, writing
\begin{equation*}
    |(\D^{j_0+1}g)^{(n - 2(j_0+1))}(r)| = \Bigg| \int_r^\infty (\D^{j_0+1}g)^{(n - 1 - 2j_0)}(s) \,ds \Bigg| \leq \int_r^\infty |(\D^{j_0}(\D g))^{(n - 1 - 2j_0)}(s)| \,ds
\end{equation*}
and using \eqref{eq:Hardy_general} with $h(s)= |(\D^{j_0}(\D g))^{(n - 1 - 2j_0)}(s)| $ and $\alpha = d + p\delta > 0$, we obtain
\begin{align}
    \|r^{\delta} (\D^{j_0+1}g)^{(n - 2(j_0+1))}(r)\|_{\Lpd(0,\infty)}^p &\leq \int_0^\infty r^{p\delta} \Bigg( \int_r^\infty |(\D^{j_0}(\D g))^{(n - 1 - 2j_0)}(s)| \,ds \Bigg) ^p r^{d-1} \, dr \nonumber\\
    &\leq \Big( \frac{p}{d + p\delta} \Big)^p \|r^{\delta + 1} (\D^{j_0}(\D g))^{(n - 1 - 2j_0)}(r)\|_{\Lpd(0,\infty)}^p. \label{eq:upper_bound_on_sum_of_Ds_homog:4}
\end{align}
Now, since $0\leq j_0 \leq \lfloor n/2 \rfloor - 1 \leq \lfloor (n-1)/2 \rfloor$, we can use our induction hypothesis for $n-1$, $j=j_0$, and $\D g$ instead of $g$, which yields
\begin{equation}\label{eq:upper_bound_on_sum_of_Ds_homog:5}
    \|r^{\delta + 1} (\D^{j_0}(\D g))^{(n - 1 - 2j_0)}(r)\|_{\Lpd(0,\infty)} \lesssim \|r^{(n-1) + \delta + 1} (\D^{n-1}(\D g))(r)\|_{\Lpd(0,\infty)}
\end{equation}
with a multiplicative constant independent of $g$. Hence, combining \eqref{eq:upper_bound_on_sum_of_Ds_homog:4} and \eqref{eq:upper_bound_on_sum_of_Ds_homog:5}, we see that \eqref{E:upper_bound_on_sum_of_Ds_homog} is true for $n$ and $j=j_0+1$, which finishes the proof.
\end{proof}

To prove the reversed inequality in \eqref{E:radial_inhom_Sob_char_Ostermann}, we need to establish a similar lemma to Lemma~\ref{lemma:upper_bound_on_sum_of_Ds_homog}. Its proof is also similar to that of Lemma~\ref{lemma:upper_bound_on_sum_of_Ds_homog}, but it is inherently more technical because we need to get under our control also the boundary terms. Theorem~\ref{thm:radial_Sob_char} and the repeated use of the following lemma with $\delta = 0$ then give
\begin{equation*}
    \|f\|_{W^{n,p}(B_R)}  \lesssim \sum_{k = 0}^n \|r^k \D^k g(r)\|_{\Lpd(0,R)}
\end{equation*}
for a radial function $f(x) = g(r(x))$ as in Theorem~\ref{thm:radial_Sob_char}, which together with \eqref{E:radial_inhom_Sob_char_Ostermann_lower} establishes \eqref{E:radial_inhom_Sob_char_Ostermann}.
\begin{lemma}
Let $R\in(0, \infty)$, $p\in[1, \infty)$, $\delta>-d/p$, $n\in\N_0$, and $j=0,\dots, \lfloor n/2 \rfloor$. Then there is a constant $C>0$ such that
    \begin{equation}\label{E:upper_bound_on_sum_of_Ds_inhomog}
          \|r^\delta (\D^{j}g)^{(n - 2j)}(r)\|_{\Lpd(0,R)} \leq C \sum_{k = 0}^n \|r^{k+\delta} \D^k g(r)\|_{\Lpd(0,R)}
    \end{equation}
    for every smooth function $g$ on $(0, R]$.
\end{lemma}
\begin{proof}
    We prove \eqref{E:upper_bound_on_sum_of_Ds_inhomog} by double induction on $n\in\N_0$ and $j=0,\dots, \lfloor n/2 \rfloor$, similarly to the proof of Lemma~\ref{lemma:upper_bound_on_sum_of_Ds_homog}. Note that \eqref{E:upper_bound_on_sum_of_Ds_inhomog} clearly holds with $C = 1$ for $n=0,1$ and $j=0$.

    Assume that \eqref{E:upper_bound_on_sum_of_Ds_inhomog} is true for $n\in\N_0$ and every $j=0,\dots, \lfloor n/2 \rfloor$. As in the proof of Lemma~\ref{lemma:upper_bound_on_sum_of_Ds_homog} with the help of Lemma~\ref{lem:RnDn_linear_combination} and our induction hypothesis, we can show that
    \begin{align}
        \|r^\delta g^{(n + 2)}(r) \|_{\Lpd(0,R)} &\lesssim \|r^{n+2 + \delta} \D^{n+2}g(r) \|_{\Lpd(0,R)} \nonumber\\
        &\quad+ \sum_{k = 0}^n \|r^{k+\delta} \D^{k+1} g(r)\|_{\Lpd(0,R)} \label{eq:upper_bound_on_sum_of_Ds_inhomog:1}
    \end{align}
    with a multiplicative constant independent of $g$. Let $k=0,\dots, n$. Writing
    \begin{equation*}
        |\D^{k+1} g(r) - \D^{k+1} g(R)| \leq \int_r^R s |\D^{k+2} g(s)| \,ds
    \end{equation*}
    and using \eqref{eq:Hardy_general} with $h(s)=s |\D^{k+2} g(s)|\chi_{(0,R)}(s)$ and $\alpha = d + (k+\delta)p > 0$, we obtain
    \begin{equation}\label{eq:upper_bound_on_sum_of_Ds_inhomog:2}
        \|r^{k+\delta} \D^{k+1} g(r) - r^{k+\delta} \D^{k+1} g(R)\|_{\Lpd(0,R)} \leq \frac{p}{d + (k+\delta)p} \|r^{k + 2 + \delta} \D^{k+2} g(r)\|_{\Lpd(0,R)}.
    \end{equation}
    Now, the elementary mean value theorem for integrals yields the existence of $r_0\in[\frac{R}{2},R]$ such that
\begin{equation*}
    \D^{k+1} g(r_0) = \frac{2}{R} \int_{R/2}^R \D^{k+1} g(r) \, dr.
\end{equation*}
Note that
\begin{align*}
    |\D^{k+1} g(R)| &= \Bigg| \int_{r_0}^R r \D^{k+2} g(r) \,dr + \frac{2}{R} \int_{R/2}^R \D^{k+1} g(r) \, dr \Bigg| \\
    &\leq \int_{R/2}^R r^{k + 2 + \delta} |\D^{k+2} g(r)| r^{-d - \delta - k} r^{d-1}\,dr + \frac{2}{R} \int_{R/2}^R r^{k+1+\delta}|\D^{k+1} g(r)| r^{-d - k - \delta} r^{d-1}\, dr.
\end{align*}
Hence, using the H\"older inequality, we obtain
\begin{align}
    |\D^{k+1} g(R)| \leq \|r^{-d - k - \delta}\|_{L^{p'}_{d-1}(R/2,R)}\cdot \Big(& \|r^{k + 2 + \delta} \D^{k + 2} g(r)\|_{\Lpd(0,R)} \nonumber\\
    &+ \frac{2}{R} \|r^{k + 1 + \delta} \D^{k+1} g(r)\|_{\Lpd(0,R)} \Big). \label{eq:upper_bound_on_sum_of_Ds_inhomog:3}
\end{align}
Furthermore, we have
\begin{equation}\label{eq:upper_bound_on_sum_of_Ds_inhomog:4}
    \|r^{k+\delta} \D^{k+1} g(R)\|_{\Lpd(0,R)} = \Bigg( \frac{R^{d + (k+\delta) p}}{d + (k+\delta) p} \Bigg)^\frac1{p} |\D^{k+1} g(R)|.
\end{equation}
Hence, combining \eqref{eq:upper_bound_on_sum_of_Ds_inhomog:1}--\eqref{eq:upper_bound_on_sum_of_Ds_inhomog:4}, we see that \eqref{E:upper_bound_on_sum_of_Ds_inhomog} is true for $n+2$ and $j=0$ with a constant independent of $g$.

Finally, let $n\geq2$, $0\leq j_0 \leq \lfloor n/2 \rfloor - 1$ and assume that \eqref{E:upper_bound_on_sum_of_Ds_inhomog} is true for $n-1$, $n-2$, and all admissible $j$'s. We will show that \eqref{E:upper_bound_on_sum_of_Ds_inhomog} holds for the given $n$ and $j=j_0+1$. As before, we use \eqref{eq:Hardy_general} and the induction hypothesis for $n-1$, $j=j_0$, which satisfies $0\leq j_0\leq \lfloor (n-2)/2\rfloor\leq \lfloor (n-1)/2\rfloor$, and $\D g$ instead of $g$ to show that
\begin{align}
    \|&r^{\delta} (\D^{j_0+1}g)^{(n - 2(j_0+1))}(r) - r^{\delta} (\D^{j_0+1}g)^{(n - 2(j_0+1))}(R)\|_{\Lpd(0,R)} \nonumber\\
    &\leq \frac{p}{d + p\delta} \cdot \|r^{\delta + 1} (\D^{j_0}(\D g))^{(n - 1 - 2j_0)}(r)\|_{\Lpd(0,R)} \nonumber\\
    &\lesssim \sum_{k = 0}^{n-1} \|r^{k + \delta + 1} (\D^{k+1} g)(r)\|_{\Lpd(0,R)} \label{eq:upper_bound_on_sum_of_Ds_inhomog:5}
\end{align}
with a multiplicative constant independent of $g$. Furthermore, using the elementary mean value theorem for integrals and the H\"older inequality once more, one can show that
\begin{align}
    |(\D^{j_0+1}g)^{(n - 2(j_0+1))}(R)| 
    &\leq \int_{R/2}^R r^{1+\delta} |(\D^{j_0}(\D g))^{(n - 1 - 2j_0)}(r)| r^{-d - \delta} r^{d-1}\,dr \nonumber\\
    &\quad+ \frac{2}{R} \int_{R/2}^R r^{1+\delta} |(\D^{j_0}(\D g))^{(n - 2 - 2j_0)}(r)| r^{-d - \delta} r^{d-1}\, dr \nonumber\\
    &\leq \|r^{-d - \delta}\|_{L^{p'}_{d-1}(R/2,R)}\cdot \Big( \|r^{1+\delta} (\D^{j_0}(\D g))^{(n - 1 - 2j_0)}(r)\|_{\Lpd(0,R)} \nonumber\\
    &\hphantom{\leq\|r^{-d - \delta}\|_{L^{p'}_{d-1}(R/2,R)}\cdot \Big(} + \frac{2}{R} \|r^{1+\delta} (\D^{j_0}(\D g))^{(n - 2 - 2j_0)}(r))\|_{\Lpd(0,R)} \Big). \label{eq:upper_bound_on_sum_of_Ds_inhomog:6}
\end{align}
Moreover, using the induction hypothesis for $n-1$, $n-2$, $j=j_0$, and $\D g$ instead of $g$, we obtain
\begin{equation}\label{eq:upper_bound_on_sum_of_Ds_inhomog:7}
    \|r^{1+\delta} (\D^{j_0}(\D g))^{(n - 1 - 2j_0)}(r)\|_{\Lpd(0,R)} \lesssim \sum_{k = 0}^{n-1} \|r^{1 + \delta + k } \D^{k+1} g(r)\|_{\Lpd(0,R)}
\end{equation}
and
\begin{equation}\label{eq:upper_bound_on_sum_of_Ds_inhomog:8}
    \|r^{1+\delta} (\D^{j_0}(\D g))^{(n - 2 - 2j_0)}(r)\|_{\Lpd(0,R)} \lesssim \sum_{k = 0}^{n-2} \|r^{1 + \delta + k } \D^{k+1} g(r)\|_{\Lpd(0,R)}
\end{equation}
with multiplicative constant independent of $g$. At last, combining \eqref{eq:upper_bound_on_sum_of_Ds_inhomog:5}--\eqref{eq:upper_bound_on_sum_of_Ds_inhomog:8}, we we see that \eqref{E:upper_bound_on_sum_of_Ds_inhomog} is holds for $n$ and $j=j_0+1$ with a multiplicative constant independent of $g$, which finishes the proof.
\end{proof}


\section{Introduction}
Radially symmetric functions, that is, functions of the form $f(x)=g(r(x))$, where $g$ is a function of a single variable and $r(x)=\sqrt{x_1^2+\dots+x_d^2}$ is the Euclidean distance from the origin in $\rd$ (here and in the rest, $d\geq2$), are elegant objects whose symmetry often brings in pleasant properties that general functions of several variables do not possess. They often form an important subspace of various function spaces measuring smoothness and integrability of functions, such as classical inhomogeneous and homogeneous Sobolev spaces $W^{n,p}(\rd)$ and $\dot{W}^{n,p}(\rd)$, respectively. A systematic study of the subspace was initiated after the seminal paper \cite{MR0454365} of Strauss. In his paper, Strauss proved the existence of so-called solitary waves in the nonlinear Klein--Gordon equation in higher dimensions. A fundamental obstacle that Strauss faced was that while the Sobolev space $W^{1,2}(\rd)=H^1(\rd)$ continuously embeds into the Lebesgue spaces $L^q(\rd)$ for every $q\in[2, 2d/(d-2)]$ or $q\in[2, \infty)$ when $d\geq3$ or $d=2$, respectively, the embedding is not compact for any values of $q$. Depending on the value of $q$, there are different phenomena causing the loss of compactness, but one is ever present\textemdash the translation invariance, which allows the mass to escape to infinity. Nevertheless, Strauss' key realization was that since he did not need to work with the entire $H^1(\rd)$ but only with its subspace $H_{rad}^1(\rd)$ consisting of radially symmetric functions, the symmetry prevents the mass from escaping. He established that a radial function $f\in H_{rad}^1(\rd)$ is necessarily continuous outside the origin and satisfies the following decay estimate:
\begin{equation*}
    |f(x)| \leq C_d r(x)^{\frac{1-d}{2}} \|f\|_{H^1(\rd)} \quad \text{for every $x\neq0$},
\end{equation*}
where $C_d$ is a constant depending only on the dimension $d\geq2$. This estimate, which is now usually called Strauss' radial lemma, was subsequently a key ingredient in proving that the restricted Sobolev embedding $H_{rad}^1(\rd)$ into $L^q(\rd)$ is compact for $q\in(2, 2d/(d-2))$, in which the right endpoint is to be interpreted as $\infty$ when $d=2$. Note that the exclusion of the endpoints is natural due to the still present vanishing ($q=2$) and concentration ($q=2d/(d-2)$) phenomena (see~\cites{Lions-vanA,Lions-vanB,Lions-conA,Lions-conB} for more information). Later, the role of symmetry on compactness and decay of functions in Sobolev spaces was comprehensively studied by Lions in \cite{Lions-com}. As an aside, the recovery of compactness can seem surprising considering that radial functions are, from a different point of view, the worst possible in Sobolev embeddings in view of the classical symmetrization principles (see~\cites{Baernstein,Talenti}). Strauss' work was accompanied by the parallel work \cite{C-G-M} of Coleman, Glaser, and Martin, who showed that ground state solutions of many Euclidean scalar field equations are radially symmetric, cementing the subspace of radial functions in Sobolev spaces as a natural important subspace to study in detail. Later, Berestycki and Lions developed a unifying theory for this in \cites{B-L1,B-L2,B-L3}.

Ever since these foundational works were published, the subspace of radial functions in various Sobolev spaces has been intensively studied. A classical related problem is characterizing when a radial function $f(x) = g(r(x))$ belongs to a Sobolev space by means of its radial profile $g(r)$. In \cite{SSV1}, such a characterization was provided for the even-order Sobolev space $W^{2n,p}(\rd)$, $p\in(1 ,\infty)$, by means of iterations of the radial Laplacian
\begin{equation}\label{E:radial_laplacian}
    \Delta_{rad} f(x)=g''(r) + (d-1)\,\frac{g'(r)}{r} \Big\rvert_{r=r(x)}.
\end{equation}
However, not only is the characterization inherently limited to even-order Sobolev spaces, but it does not extend to the endpoints $p\in\{1,\infty\}$ either. This is rooted in the fact that Riesz transforms are not bounded on $L^1(\rd)$ and $L^\infty(\rd)$ (see~\cite{Stein}). Around the same time, a different characterization was obtained in \cite{GdF-dS-M} for the integer-order Sobolev space $W^{n,p}(B_R)$ on the open ball $B_R$ with radius $R\in(0, \infty)$ centered at the origin. While this characterization works for any integer order, the parameters $n\in\N$ and $p\geq1$ have to satisfy $(n-1)p < d$, which considerably restricts one when the other is fixed. Recently, a characterization without any restrictions on $n\in\N$ and $p\in[1, \infty)$ was obtained in \cite{Ostermann}. It draws from the short beautiful paper \cite{Lyons-Zumbrun}, which contains an elegant formula for partial derivatives of radial functions, and combines it with Hardy-type inequalities. Note that if one had a suitably simple formula for the norm of the tensor of higher-order derivatives, it would directly lead to a simple characterization of the subspace of radially symmetric functions (see~Section~\ref{sec:subspace} for more information).

Since various problems involve the norm of the tensor of higher-order derivatives of radial functions, it would be desirable to have a simple elegant closed-form formula for it.
\zdenek{Našel jsem nějaké reference ohledně aproximace pomocí radiálních bázových funkcí, na blowup analýzu pde a spektrálku, které by se daly sice přihodit, ale byly by to klasické nafukování referencí, tak bych to raději nedělal. Asi nemá smysl předstírat, že je to hrozně užitečná věc, spíše to prezentovat jako "tady máte a buď berte, nebo ne".}
However, such a formula for general $n\in\N$ is not available in the literature, to the best of our knowledge. The main goal of this paper is to close this gap.
\zdenek{Moc nevím, jak tu uchopit existenci Faà di Bruneho formule. Jeden by totiž mohl říct, že to formuli dává a když se zapíše pomocí Bellovo polynomů, tak je diskutabilní, jak je to s její elegancí. Otázka je, zda to chceme v úvodu nějak komentovat (nemám teď nápad jak), nebo jestli zavřeme oči a půjdeme dál.}

Let $f:\Omega\to \R$ be \emph{any} (sufficiently smooth) function defined on
an open set $\Omega\subset\R^d$. We denote by $\nabla^n f(x)$ the tensor of all $n$-th order partial derivatives of $f$, that is,
\[
\nabla^n f(x)=(\nabla^n f(x))_{i_1,\dots,i_n=1}^d,\quad \text{where}\quad (\nabla^n f(x))_{i_1,\dots,i_n}=\frac{\partial^n f(x)}{\partial x_{i_1}\dots\partial x_{i_n}}.
\]
Then we put
\begin{equation}\label{eq:defS_n}
(S_nf)(x)=\|\nabla^n f(x)\|_F^2=\sum_{i_1,\dots,i_n=1}^d \Bigl(\frac{\partial^nf(x)}{\partial x_{i_1}\dots\partial x_{i_n}}\Bigr)^2.
\end{equation}

\medskip

For small values of $n$ it is quite easy to observe (cf. \cite[Theorem 6]{SSV1}) that $S_nf(x)$ takes a particularly elegant form if $f(x)=g(r(x))$
is a radially symmetric function. Indeed, a direct calculation shows that
\begin{equation}\label{eq:intro1}
(S_1f)(x)=\sum_{j=1}^d \Bigl(\frac{\partial f(x)}{\partial x_j}\Bigr)^2=\sum_{j=1}^d \Bigl(\frac{g'(r(x))}{r(x)}\cdot x_j\Bigr)^2=[g'(r(x))]^2.
\end{equation}
A similar formula for $n=2$ can still be obtained in a straightforward way based on the identity 
\begin{equation}\label{eq:intro2}
\frac{\partial^2 f}{\partial x_j \partial x_i}(x) = \frac{x_i x_j}{r^2} \left( g''(r) - \frac{g'(r)}{r} \right) + \delta_{ij} \frac{g'(r)}{r},
\end{equation}
where we write $r$ instead of $r(x)$ to simplify the notation.
After taking the square and summing up over $i,j=1,\dots,d$, one arrives at
\begin{equation}\label{eq:Sn2}
(S_2 f)(x)=[g''(r)]^2 + \frac{d-1}{r^2}[g'(r)]^2.
\end{equation}
This result is well-known and has a classical geometric interpretation. The spectrum of the Hessian $Hf(x)=\nabla^2f(x)$
is rotationally invariant and consists of one eigenvalue $g''(r)$ and the eigenvalue $g'(r)/r$ with multiplicity $d-1$.
The square of the Frobenius norm of $Hf(x)$ is then the sum of the squares of these eigenvalues. Note that the {\color{blue} radial} Laplacian \eqref{E:radial_laplacian}
is the sum of these eigenvalues.\zdenek{Změnil jsem tu terminologicky spherical Laplacian na radial Laplacian, protože podle mě by sférický Laplace (vlastně Laplace-Beltrami operátor na sféře) pro rad. symetrickou funkci byl nulový. :-) Ale možná je to jen otázka nekonzistentní terminologie, pokud bychom to vraceli, tak je potřeba to sjednotit taky před \eqref{E:radial_laplacian}.}

\medskip
The calculation of $(S_nf)(x)$ for $n\ge 3$ becomes quickly very technical and time-consuming. Nevertheless, one can still directly verify that
\[
(S_3f)(x)=(g'''(r))^2 + 3(d-1) \left( \frac{g''(r)}{r} - \frac{g'(r)}{r^2} \right)^2,
\]
which hints that an appealing general formula for $S_n f$ might exist. This was indeed conjectured during the work on \cite{SSV1},
but the problem remained unsolved until now.
The following theorem finally solves this problem. We denote $\D g(r)=g'(r)/r$.
\begin{theorem}\label{thm:main'}
Let $d\ge 2$, $n\ge 1$ and let $f(x)=g(r(x))$ be a smooth radial function. Then $(S_n f)(x)$
is a radial function with
\begin{equation*}
    (S_nf)(x)=(T_ng)(r(x)),
\end{equation*}
where
\begin{align}\label{eq:defT'}
(T_ng)(r)&=\sum_{j=0}^{\lfloor n/2\rfloor} \alpha^d_{n,j} \left[(\D^j g)^{(n-2j)}(r)\right]^2
\end{align}
where $\alpha^d_{n,0}=1$ and
\[
\alpha^d_{n,j}= \binom{n}{2j}\cdot (2j-1)!!\cdot \prod_{k=0}^{j-1} (d-1+2k),\quad 1\le j\le \lfloor n/2\rfloor.
\]
\end{theorem}
It is worth pointing out that the formula \eqref{eq:defT'} is pointwise and does not involve any particular function norms. Furthermore, it is not hard to write out $(\D^j g)^{(n-2j)}$ purely in terms of derivatives of~$g$ (see~Section~\ref{sec:worked_out_formula}).



\section{Introduction} 
We start by fixing some notation. Let $f\colon\Omega\to \R$ be a (sufficiently smooth) function defined on an open set $\Omega\subset\R^d$. We denote by $\nabla^n f(x)$ the \emph{tensor of all $n$-th order partial derivatives} of $f$, that is,
\[
\nabla^n f(x)=(\nabla^n f(x))_{i_1,\dots,i_n=1}^d,\quad \text{where}\quad (\nabla^n f(x))_{i_1,\dots,i_n}=\frac{\partial^n f(x)}{\partial x_{i_1}\dots\partial x_{i_n}}.
\]
Then we put
\begin{equation}\label{eq:defS_n}
(S_nf)(x)=\|\nabla^n f(x)\|_F^2=\sum_{i_1,\dots,i_n=1}^d \Bigl(\frac{\partial^nf(x)}{\partial x_{i_1}\dots\partial x_{i_n}}\Bigr)^2.
\end{equation}
Note that $\|\nabla^n f(x)\|_F$ is the \emph{Frobenius norm} of $\nabla^n f(x)$. By $B_R$, we denote the open ball in $\rd$ centered at the origin with radius $R\in(0,\infty]$. When $R=\infty$, we interpret $B_R$ as $\rd$. Here and in the rest, we always assume that $d\geq2$. For $p\in[1, \infty)$, the (inhomogeneous) \emph{Sobolev space} $W^{n,p}(B_R)$ is defined as the completion of smooth functions on the closed ball $\widebar{B_R}$, that is, of $\Cinf(\widebar{B_R})$-functions, with respect to the norm
 \begin{equation}\label{E:inhom_Sob}
     \|f\|_{W^{n,p}(B_R)} = \sum_{k = 0}^n \|\nabla^k f\|_{L^{p}(B_R)}.
 \end{equation}
 For brevity, we write $\|\nabla^k f\|_{L^{p}(B_R)}$ instead of $\|\|\nabla^k f\|_F\|_{L^{p}(B_R)}$. When $R=\infty$, we will equivalently consider the completion of $\Cinf(\rd)$-functions with compact support. We now turn our attention to \emph{radial functions}, that is, functions of the form $f(x)=g(r(x))$, where $g$ is a function of a single variable and $r(x)=\sqrt{x_1^2+\dots+x_d^2}$ is the Euclidean distance from the origin in $\rd$.

Radially symmetric functions are elegant objects whose symmetry often brings in pleasant properties that general functions of several variables do not possess. They often form an important subspace of various function spaces measuring smoothness and integrability of functions, such as Sobolev spaces $W^{n,p}(B_R)$. A systematic study of radial functions in Sobolev spaces was initiated after the seminal paper \cite{MR0454365} of Strauss.

In his paper, Strauss proved the existence of so-called solitary waves in the nonlinear Klein--Gordon equation in higher dimensions. A fundamental obstacle that Strauss faced was that unlike on bounded domains, the Sobolev space $W^{1,2}(\rd)=H^1(\rd)$ does not compactly embed into any Lebesgue space $L^q(\rd)$. Depending on the value of $q$, there are different phenomena causing the loss of compactness (see~\cites{Lions-vanA,Lions-vanB,Lions-conA,Lions-conB} for more information), but one is ever present\textemdash the translation invariance, which allows the mass to escape to infinity. Nevertheless, Strauss' key realization was that since he did not need to work with the entire $H^1(\rd)$ but only with its subspace $H_{rad}^1(\rd)$ consisting of radially symmetric functions, the symmetry prevents the mass from escaping. He established that a radial function $f\in H_{rad}^1(\rd)$ is necessarily continuous outside the origin and satisfies the following decay estimate:
\begin{equation*}
    |f(x)| \leq C_d r(x)^{\frac{1-d}{2}} \|f\|_{H^1(\rd)} \quad \text{for every $x\neq0$},
\end{equation*}
where $C_d$ is a constant depending only on the dimension $d\geq2$.

This estimate, which is now usually called \emph{Strauss' radial lemma}, was subsequently a key ingredient in proving that the restricted Sobolev space $H_{rad}^1(\rd)$ embeds compactly into $L^q(\rd)$ for suitable values of $q$. Later, the role of symmetry on compactness and decay of functions in Sobolev spaces was comprehensively studied by Lions in \cite{Lions-com}. Strauss' work was accompanied by the parallel work \cite{C-G-M} of Coleman, Glaser, and Martin, who showed that ground state solutions of many Euclidean scalar field equations are radially symmetric, cementing the subspace of radial functions in Sobolev spaces as a natural important subspace to study in detail. Later, Berestycki and Lions developed a unifying theory for this in \cites{B-L1,B-L2,B-L3}. As a final aside, the recovery of compactness can seem surprising considering that radial functions are, from a different point of view, the worst possible in Sobolev embeddings in view of the classical symmetrization principles (see~\cites{Baernstein,Talenti}). In fact, there is an elegant geometric reason behind this (see~\cite{MR4277332} and references therein).

Ever since these foundational works were published, the subspace of radial functions in various Sobolev spaces has been
intensively studied \textcolor{blue}{\cite{GdF-dS-M,Win1,Win2,MR3330617}}.
A classical related problem is characterizing when a radial function $f(x) = g(r(x))$ belongs to the Sobolev space $W^{n,p}(B_R)$ by means of its \emph{radial profile} $g$. For a locally integrable radial function $f$, its radial profile $g$ is defined as $g(t)=f(t\cdot\cVec{1})$, which is a well-defined measurable function on $(-R,R)$ (see~\cite[p.~5]{SSV1}). Note that if $f(x) = g(r(x))\in\Cinf(\widebar{B_R})$, then its radial profile $g$ belongs to $\Cinf_{even}([-R,R])$. By $\Cinf_{even}([-R,R])$, we denote the space of \emph{even $\Cinf([-R,R])$-functions}. When $R=\infty$, we interpret the interval $[-R,R]$ as $\R$. On the other hand, for every $g\in\Cinf_{even}([-R,R])$, the function $f$ defined as $f(x) = g(r(x))$ is a smooth radial function (cf.~\cite{Whitney:43}).

In \cite{SSV1}, such a characterization was provided for the even-order Sobolev space $W^{2n,p}(\rd)$, $p\in(1 ,\infty)$, by means of iterations of the \emph{radial Laplacian}
\begin{equation}\label{E:radial_laplacian}
    \Delta_{r} f(x) = D_r g(r) = g''(r) + (d-1)\,\frac{g'(r)}{r} \Big\rvert_{r=r(x)}
\end{equation}
and a suitable \emph{weighted Lebesgue space}. For $-\infty\leq a < b\leq\infty$, we will denote  by $\Lpd(a,b)$ the weighted Lebesgue space with the weight $|t|^{d-1}$, that is,
\begin{equation*}
    \|h\|_{\Lpd(a,b)}^p = \int_a^b |h(t)|^p |t|^{d-1} \,dt
\end{equation*}
for every measurable function $h$ on $(a, b)$. They showed that a radial function $f(x) = g(r(x))$ belongs to $W^{2n,p}(\rd)$ if and only if its radial profile $g$ belongs to the closure of compactly supported $\Cinf_{even}(\R)$-functions with respect to the norm
\begin{equation*}
\|g\|_{\Lpd(\R)} + \|D_r^n g\|_{\Lpd(\R)}.
\end{equation*}
However, not only is this characterization inherently limited to even-order Sobolev spaces, but it does not extend to the endpoints $p\in\{1,\infty\}$ either. This is rooted in the fact that Riesz transforms are not bounded on $L^1(\rd)$ and $L^\infty(\rd)$ (see~\cite{Stein}).

Around the same time, a different characterization was obtained in \cite{GdF-dS-M} for the integer-order Sobolev space $W^{n,p}(B_R)$ on the open ball $B_R$ with $R\in(0, \infty)$. While their characterization works for any integer order and also $p=1$, the parameters $n\in\N$ and $p\geq1$ have to satisfy $(n-1)p < d$, which considerably restricts one when the other is fixed. Under this restriction, it follows from their results that a radial function $f(x) = g(r(x))$ belongs to $W^{n,p}(B_R)$ if and only if its radial profile $g$ belongs to the closure of $\Cinf_{even}([-R,R])$ with respect to the norm
\begin{equation*}
\sum_{k=0}^n \|g^{(k)}\|_{\Lpd(-R,R)}.
\end{equation*}

Recently, a characterization without any restrictions on $n\in\N$ and $p\in[1, \infty)$ was obtained in \cite{Ostermann}. It draws from the short beautiful paper \cite{Lyons-Zumbrun}, which contains an elegant formula for partial derivatives of radial functions. Since we think \cite{Lyons-Zumbrun} deserves more attention than it has obtained so far, we recall the formula here. Given a multi-index $\alpha=(\alpha_1,\dots,\alpha_d)\in\N_0^d$ with $|\alpha|=\alpha_1+\dots+\alpha_d = n$ and a smooth radial function $f(x) = g(r(x))$, the formula asserts that
\begin{equation}\label{E:Lyons-Zumbrun}
    \frac{\partial^n f(x)}{\partial x_1^{\alpha_1}\dots\partial x_d^{\alpha_d}} = \sum_{k = 0}^{\lfloor n/2 \rfloor} \frac1{2^k k!} \Delta^k(x^\alpha) \cdot \D^{n - k} g(r),
\end{equation}
where $x^\alpha = x_1^{\alpha_1}\dots x_d^{\alpha_d}$, $\Delta$ is the Laplace operator, and $\D g(r)=g'(r)/r$. With the help of \eqref{E:Lyons-Zumbrun} and Hardy-type inequalities, a characterization in terms of the modified radial profile $f(x)=h(r(x)^2)$ was proved in \cite{Ostermann}. With the usual radial profile, it can be formulated as follows. A radial function $f(x) = g(r(x))$ belongs to $W^{n,p}(B_R)$ if and only if $g$ belongs to the closure of $\Cinf_{even}([-R,R])$ with respect to the norm
\begin{equation*}
\sum_{k=0}^n \|t^k \D^k g(t)\|_{\Lpd(-R,R)}.
\end{equation*}

Note that if one had a suitably simple formula for $(S_n f)(x)$, it would directly lead to a simple characterization of the subspace of radial functions (see~Section~\ref{sec:subspace} for more information). Of course, generalizations of the chain rule for higher-order derivatives are available, such as Faà di Bruno's formula (see \cite{Johnson:02} and references therein), but they usually do not lead to appealing formulas. Although the expression $(S_n f)(x)$ is frequently used for $n=2$, various problems require working with $(S_n f)(x)$ for large values of $n\geq3$. For example, it appears in the calculus of variations as the so-called \emph{$n$-harmonic energy functional} or \emph{$n$-polyenergy} (see~\cite{AP,GS}). Furthermore, $(S_n f)(x)$ for large values of $n$ is used in the construction of \emph{polyharmonic splines} (see \cite[Chapter 2.4]{W}\zdenek[]{Chceme sem (či jinam) přidat referenci \cite{Duchon}?}). Therefore, it would be desirable to have a simple, elegant closed-form formula for $(S_n f)(x)$. 

However, such a formula for general $n\in\N$ is not available in the literature, to the best of our knowledge. The main goal of this paper is to close this gap.

For small values of $n$ it is quite easy to observe (cf.~\cite[Theorem 6]{SSV1}) that $(S_nf)(x)$ takes a particularly elegant form if $f(x)=g(r(x))$ is a radially symmetric function. Indeed, a direct calculation shows that
\begin{equation}\label{eq:Sn1}
(S_1f)(x)=\sum_{j=1}^d \Bigl(\frac{\partial f(x)}{\partial x_j}\Bigr)^2=\sum_{j=1}^d \Bigl(\frac{g'(r)}{r}\cdot x_j\Bigr)^2=[g'(r)]^2,
\end{equation}
where we write $r$ instead of $r(x)$ to simplify the notation. A similar formula for $n=2$ can still be obtained in a rather straightforward way; one arrives at
\begin{equation}\label{eq:Sn2}
(S_2 f)(x)=[g''(r)]^2 + \frac{d-1}{r^2}[g'(r)]^2.
\end{equation}
This result is well-known and has a classical geometric interpretation. The spectrum of the Hessian $Hf(x)=\nabla^2f(x)$
is rotationally invariant and consists of one eigenvalue $g''(r)$ and the eigenvalue $g'(r)/r$ with multiplicity $d-1$.
The square of the Frobenius norm of $Hf(x)$ is then the sum of the squares of these eigenvalues. Note that the radial Laplacian \eqref{E:radial_laplacian} is the sum of these eigenvalues.

For $n\ge 3$, the calculation of $(S_nf)(x)$  becomes quickly very technical and time-consuming. Nevertheless, one can still directly verify that
\[
(S_3f)(x)=(g'''(r))^2 + 3(d-1) \left( \frac{g''(r)}{r} - \frac{g'(r)}{r^2} \right)^2,
\]
which hints that an appealing general formula for $S_n f$ might exist. This was conjectured during the work on \cite{SSV1},
but the problem remained unsolved until now. The following theorem finally solves this problem. Recall that we denote $\D g(r)=g'(r)/r$.
\begin{theorem}\label{thm:main'}
Let $n\ge 1$, $d\geq2$, and let $f(x)=g(r(x))$ be a smooth radial function. Then $(S_n f)(x)$
is a radial function with
\begin{equation*}
    (S_nf)(x)=(T_ng)(r(x)),
\end{equation*}
where
\begin{align}\label{eq:defT'}
(T_ng)(r)&=\sum_{j=0}^{\lfloor n/2\rfloor} \alpha^d_{n,j} \left[(\D^j g)^{(n-2j)}(r)\right]^2,
\end{align}
where $\alpha_{n,j}^d$ are positive integers with $\alpha^d_{n,0}=1$ and
\[
\alpha^d_{n,j}= \binom{n}{2j}\cdot (2j-1)!!\cdot \prod_{k=0}^{j-1} (d-1+2k),\quad 1\le j\le \lfloor n/2\rfloor.
\]
\end{theorem}
It is worth pointing out that the formula \eqref{eq:defT'} is pointwise and does not involve any particular function norms. Furthermore, it is not hard to write out $(\D^j g)^{(n-2j)}$ purely in terms of derivatives of~$g$ (see~Section~\ref{sec:worked_out_formula}).


